\newtheoremstyle{plain} 
     {2ex}              
     {2ex}              
     {}         
     {}                 
     {\bfseries}        
     {}                 
     {1ex}              
     {\thmname{#1 }\thmnumber{#2}\thmnote{ \normalfont{(#3)}}.
}
\newtheoremstyle{remark}
     {2ex}              
     {2ex}              
     {}                 
     {}                 
     {\bfseries}        
     {}                 
     {1ex}              
     {\thmname{#1 }\thmnumber{#2}\thmnote{ \normalfont{(#3)}}.
}
\theoremstyle{plain}
\newtheorem{definition}{Definition}[section]
\newtheorem{remark}[definition]{Remark}
\newtheorem{proposition}[definition]{Proposition}
\newtheorem{lemma}[definition]{Lemma}
\newtheorem{theorem}[definition]{Theorem}
\newtheorem{corollary}[definition]{Corollary}
\newtheorem{example}[definition]{Example}
\numberwithin{equation}{section}
\newcommand{\nats}{\mathbb{N}}
\newcommand{\pnats}{\ensuremath{\nats_+}}
\newcommand{\ints}{\mathbb{Z}}
\newcommand{\reals}{\mathbb{R}}
\newcommand{\preals}{\ensuremath{\reals_+}}
\newcommand{\comps}{\mathbb{C}}
\newcommand{\rproj}{\ensuremath{\mathbb{RP}}}
\newcommand{\cproj}{\ensuremath{\mathbb{CP}}}
\newcommand{\del}{\partial}
\newcommand{\me}{\mathrm{e}}
\newcommand{\mi}{\mathrm{i}}
\DeclareMathOperator{\sgn}{sgn}
\newcommand{\hl}[1]{\textnormal{\textbf{#1}}} 
\newcommand{\alphabar}{\ensuremath{\overline{\alpha}}}
\newcommand{\alphahat}{\ensuremath{\widehat{\alpha}}}
\newcommand{\chat}{\ensuremath{\widehat{c}}}
\newcommand{\cl}[1]{\ensuremath{\overline{#1}}}
\newcommand{\domain}{\ensuremath{\mathcal{D}}}
\newcommand{\gtilde}{\ensuremath{\widetilde{g}}}
\newcommand{\gtildeunder}{\ensuremath{\underline{\gtilde}}}
\newcommand{\htilde}{\ensuremath{\widetilde{h}}}
\newcommand{\sing}{\ensuremath{\mathcal{V}}}
\newcommand{\supp}{\operatorname{supp}}
\newcommand{\utilde}{\ensuremath{\widetilde{u}}}
\newcommand{\xhat}{\ensuremath{\widehat{x}}}
\newcommand{\Xtilde}{\ensuremath{\widetilde{X}}}
\begin{document}
\title[Improvements for smooth points]
  {Asymptotics of coefficients of multivariate generating functions: \\
   improvements for smooth points}
\author{Alexander Raichev}
\address{Department of Computer Science \\ University of Auckland \\
  Private Bag 92019 \\ Auckland \\ New Zealand}
\email{raichev@cs.auckland.ac.nz}
\author{Mark C. Wilson}
\address{Department of Computer Science \\ University of Auckland \\
  Private Bag 92019 \\ Auckland \\ New Zealand}
\email{mcw@cs.auckland.ac.nz}
\begin{abstract}
Let $\sum_{\beta\in\nats^d} F_\beta x^\beta$ be a multivariate power series.
For example $\sum F_\beta x^\beta$ could be a generating 
function for a combinatorial class.
Assume that in a neighbourhood of the origin this series represents a nonentire
function $F=G/H^p$ where $G$ and $H$ are holomorphic and $p$ is a positive
integer.
Given a direction $\alpha\in\pnats^d$ for which the asymptotics are
controlled by a smooth point of the singular variety $H = 0$,
we compute the asymptotics of $F_{n \alpha}$ as $n\to\infty$.
We do this via multivariate singularity analysis and give an explicit
uniform formula for the full asymptotic expansion.
This improves on earlier work of R. Pemantle and the second author
and allows for more accurate numerical approximation, as demonstrated by our
our examples (on lattice paths, quantum random walks, and nonoverlapping 
patterns).
\end{abstract}
\date{14 July 2008}
\subjclass{05A15, 05A16}
\keywords{higher-order terms, multivariate singularity analysis}
\maketitle
\section{Introduction}\label{sec:intro}

Let $\sum_{\beta\in\nats^d} F_\beta x^\beta$ be a multivariate power series.
For example $\sum F_\beta x^\beta$ could be a generating 
function for a combinatorial class.
In \cite{PeWi2002, PeWi2004} Pemantle and Wilson derived asymptotic expansions 
for the coefficients $F_\beta$ as $\beta\to\infty$ for large classes of 
series that arise often in applications.
In this article we further their program of asymptotics of coefficients of 
multivariate generating functions.

Assume that in a neighbourhood of the origin the power series 
$\sum_{\beta\in\nats^d} F_\beta x^\beta$ is the Maclaurin series of a nonentire
function $F=G/H^p$ where $G$ and $H$ are holomorphic and $p$ is a positive
integer.
For example $F$ could be a rational function.
Using multivariate singularity analysis we derive the asymptotics of 
$F_{n \alpha}$ for $\alpha\in\pnats^d$ and $n\to\infty$ in the case that these
asymptotics are controlled by smooth points of the singular variety $H = 0$.

Our presentation is organized as follows.
In Section~\ref{sec:notation} we set our notation and basic definitions.
In Section~\ref{sec:asymptotics} we give two explicit formulas for all the terms 
in the asymptotic expansion of $F_{n \alpha}$.
This improves upon \cite{PeWi2002} which gave formulas for the leading term only
and only for the case $p=1$.
Furthermore, we prove that the expansions for $F_{n\alpha}$ are uniform in 
$\alpha$ as claimed in \cite{PeWi2002}.
In Section~\ref{sec:original} we go on to express our formulas in terms of the 
original data $G$ and $H$ for easier use in calculation. 
In Section~\ref{sec:examples} we apply our formulas to several representative 
combinatorial examples (lattice paths, quantum random walks, and nonoverlapping
patterns) and demonstrate the greater numerical accuracy of 
many-term asymptotic expansions over single-term expansions. 
Maple 11 worksheets are provided.
Lastly, in Section~\ref{sec:analysis} we present the two theorems from analysis 
on Fourier-Laplace integrals that we use in our proofs.
\section{Notation and definitions}\label{sec:notation}

Let $\pnats$ and $\preals$ denote the set of positive natural numbers and 
positive real numbers, respectively.
For $r\in\reals$ and $k\in\nats = \pnats\cup\{ 0 \}$ set
$r^{\overline{k}} = r(r+1)\cdots(r+k-1)$, the $k$th rising factorial power of 
$r$, with the convention that $r^{\overline{0}}=1$.
For $m\in\pnats$, $x\in\comps^m$, and $i\le m$ let $x_i$ denote component $i$
of $x$ and $\xhat = (x_1,\ldots,x_{m-1})$.
For $\alpha\in\pnats^m$, $x\in\comps^m$, and $n\in\pnats$ define 
$\alpha+1 = (\alpha_1+1,\ldots,\alpha_m+1)$,
$\alpha!  = \alpha_1!\cdots\alpha_m!$,
$n\alpha = (n\alpha_1,\ldots,n\alpha_m)$,
$x^\alpha = x_1^{\alpha_1}\cdots x_m^{\alpha_m}$,
and $\del^\alpha = \del_1^{\alpha_1} \cdots \del_m^{\alpha_m}$,
where $\del_j$ is partial differentiation with respect to component $j$.
For $c\in\comps^m$ let 
$D(c) =\{x\in\comps^m : \forall j\le m\; |x_j|<|c_j|\}$ and
$C(c) =\{x\in\comps^m : \forall j\le m\; |x_j|=|c_j|\}$,
the polydisc and polycircle centred at the origin with polyradius 
$(|c_1|,\ldots,|c_m|)$, respectively.
Finally, for $z\in\comps\setminus\{0\}$ with $\arg z \in [-\pi/2,\pi/2]$ we set
$z^{-1/v} = |z|^{-1/v}\me^{-\mi\arg z/v}$.

Fix $d\in\pnats$ and let $G,H:\domain\to\comps$ be holomorphic functions
on a nonempty domain $\domain\subseteq\comps^d$ (an open connected set).
Assume that $G$ and $H$ are relatively prime in the ring of holomorphic
functions on $\domain$.
Let $p\in\pnats$ and define $F=G/H^p$. 
Then $F$ is holomorphic on $\domain\setminus\sing$, where $\sing$ is
the analytic variety $\{x\in\domain : H(x)=0\}$.
By \cite[Example 4.1.5,Corollary 4.2.2]{Sche2005} 
the variety $\sing$ has dimension $d-1$ and $\domain\setminus\sing$ is a domain.
Assume $0\in\domain\setminus\sing$ and $\sing\neq\emptyset$.
Let $\sum_{\beta\in\nats^d} F_\beta x^\beta$ be the Maclaurin series of $F$,
so $\del^\beta F(0)/\beta! = F_\beta$ for all $\beta\in\nats^d$.

We will derive asymptotics for $F_\beta$ as $\beta\to\infty$ along straight
lines through the origin and off the axes, that is, for $F_{n\alpha}$ with
$\alpha\in\pnats^d$ and $n\to\infty$.
For $d=2$ and $\beta\to\infty$ along more general paths see \cite{Llad2006}.

First we recall some key definitions from \cite{PeWi2002}.
Just as in the univariate case, the asymptotics for the Maclaurin coefficients 
of $F$ are determined by the location and type of singularities of $F$, that is,
by the geometry of $\sing$.
Generally the singularities closest to the origin are the most important.
We define `closest to the origin' in terms of polydiscs.

\begin{definition}
Let $c\in\sing$.
We say that $c$ is \hl{minimal} if there is no point $x\in\sing$ such that 
$|x_j|<|c_j|$ for all $j\le d$.
We say that $c$ is \hl{strictly minimal} if there is a unique $x\in\sing$ such 
that $|x_j| \leq |c_j|$ for all $j$, namely $x = c$, and we say
that $c$ is \hl{finitely minimal} if there are finitely many such values of $x$.
\end{definition}

The variety $\sing$ always contains minimal points.
To see this let $c\in\sing$ and define $f:\sing\cap\cl{D(c)}\to\reals$ by
$f(x)=\sqrt{x_1^2 +\cdots+ x_d^2}$.
Since $f$ is a continuous function on a compact space, it has a minimum, 
and that minimum is a minimal point of $\sing$.

We focus on the singularities of $F$ with the simplest geometry, namely the
regular or smooth points of $\sing$.
For a summary of what is known about non-smooth points see the survey 
\cite{PeWi2008}.

\begin{definition}
A point $c\in\sing$ is called \hl{smooth} if 
$(\del_1 H(x),\ldots,\del_d H(x)) \neq 0$ for all $x$ in a neighborhood of $c$.
\end{definition}

Equivalently, a point $c\in\sing$ is smooth iff
there is a neighborhood $U$ of $c$ in $\comps^d$ such that $\sing\cap U$ is a 
complex submanifold of $U$
\cite[Theorem 15, page 364]{BrKn1986}.

We will approximate $F_{n\alpha}$ with integrals and, in so doing, will need to 
consider the singularities of $F$ relevant to $\alpha$.
These singularities are called critical points.
For a more geometric explanation of their relevance and the stratified Morse 
theory behind it see \cite{PeWi2008}.

\begin{definition}
Let $\alpha\in\pnats^d$ and $c\in\sing$ be a smooth point.
We say that $c$ is \hl{critical} for $\alpha$ if it is a solution of the system
of $d-1$ equations
\[
  \alpha_1^{-1} x_1 \del_1 H(x)
  = \ldots = \alpha_d^{-1} x_d \del_d H(x). 
\]
We say $c$ is \hl{isolated} for $\alpha$ if, in addition, it has a 
neighbourhood in which it is the only critical point for $\alpha$.
\end{definition}

When $H$ is a polynomial, the system of $d$ equations in $d$ unknowns given by 
$H(x)=0$ and the critical point equations generally has a finite set of 
solutions.

\begin{remark}
At times it will be convenient to work in projective space.
Recall that $\cproj^{d-1}$ is the set of equivalences classes of 
$\comps^d\setminus\{0\}$ under the equivalence relation $\sim$ given by
$x\sim x'$ iff $x = \lambda x'$ for some $\lambda\in\comps\setminus\{0\}$.
Let $\bar{\; }:\comps^{d}\setminus\{0\}\to\cproj^{d-1}$ be the natural
map which takes a point $x$ to its equivalence class $[x]$.

Notice that the definition of critical point is well-defined for all 
$\alpha\in\cproj^{d-1}$ with nonzero components.
\end{remark}
\section{The full asymptotic expansion}\label{sec:asymptotics}  

Pemantle and Wilson \cite{PeWi2002} showed that if $\alpha\in\pnats^d$ and
$c\in\sing$ is strictly minimal, smooth, critical and isolated for $\alpha$, and
nondegenerate (which we will define shortly), then there exist $b_k\in\comps$ 
such that for all $N\in\pnats$ one has the asymptotic expansion 
\[ 
F_{n \alpha} 
= 
c^{-n \alpha} 
\left[ 
\sum_{k=0}^{N-1} 
b_k n^{-(d-1)/2 - k} +O\left(n^{-(d-1)/2-N}\right) \right]
\] 
as $n \to \infty$.
They also derived a similar expansion for degenerate points in the case $d=2$ 
and gave an explicit formula for $b_0$ for all $d$ when $p=1$.

In this section we derive explicit formulas for all $b_k$ and all $p$ and
prove that the asymptotic expansions are uniform in $\alpha$.

To formulate our results we employ the following functions.

\begin{definition}\label{functions}
Let $c\in\sing$ be smooth, and assume without loss of generality that 
$\del_d H(c)\neq 0$.
By the implicit function theorem there exists a bounded neighborhood $W$
of $\chat$ and a holomorphic function $h$ on $W$ such that $(w,h(w))\in\sing$, 
$\del_d H(w,h(w))\neq0$ for all $w\in W$, and $h(\chat) = c_d$.
Suppose $c_d \neq 0$, so that we may also assume that $h$ is nonzero on $W$.

For $0\le j<p$ define $u_j:W\to\comps$, $E:[-1,1]^{d-1}\to\comps^{d-1}$,
and $\utilde_j,\htilde:E^{-1}(W \cap C(c))\to\comps$ by
\begin{align*}
u_j(w)
  &= \lim_{y \to h(w)} (-y)^{-p+j}
  \frac{\del^j}{\del y^j} \Big( (y - h(w))^p F(w,y) \Big), \\
E(t)
  &= (c_1\me^{\mi t_1},\ldots,c_{d-1}\me^{\mi t_{d-1}}), \\
\utilde_j
  &= u_j\circ E \\
\htilde
  &= h \circ E.
\end{align*}
Furthermore, for $c$ critical for $\alpha$ define $\gtilde:E^{-1}(W)\to\comps$
by
\[
\gtilde(t)
  = \log \left( \frac{\htilde(t)}{\htilde(0)} \right) +
  \mi\sum_{m=1}^{d-1} \frac{\alpha_m}{\alpha_d} t_m.
\]

Then $\gtilde$ is well-defined for any $\alpha\in\cproj^{d-1}$ with 
$\alpha_d\neq 0$ since $\gtilde$ does not depend on the magnitude of $\alpha$
and since $h$ is nonzero on $W$.  
Moreover, $\utilde_j$, $\htilde$, and $\gtilde$ are all $C^\infty$ functions.

If $\det\gtilde''(0)\neq 0$, then $c$ is called \hl{nondegenerate}.
\end{definition}

In the context of a single pair of appropriate $c$ and $\alpha$ we will use the 
functions of Definition~\ref{functions} without further introduction.

Now for the first theorem.

\begin{theorem}\label{asymptotics}
Let $d\ge 2$ and $\alpha\in\pnats^d$.
If $c\in\sing$ is strictly minimal, smooth with $c_d\del_d H(c) \neq 0$, 
critical and isolated for $\alpha$, and nondegenerate, then for all $N\in\nats$,
\begin{align*}
F_{n \alpha} \tag{$\star$}
=
c^{-n \alpha } \Bigg[
& \Big( (2\pi\alpha_d n)^{d-1} \det \gtilde''(0) \Big)^{-1/2}
\sum_{j=0}^{p-1} \sum_{k=0}^{N-1} 
\frac{ (\alpha_d n+1)^{\overline{p-1-j}}}{(p-1-j)! j!}
(\alpha_d n)^{-k} L_k(\utilde_j, \gtilde) \\
 &
+O\left(n^{p-1-(d-1)/2-N}\right) 
\Bigg]
\end{align*}
as $n\to\infty$.

Here
\[
L_k(\utilde_j,\gtilde)
=  
\sum_{l=0}^{2k} 
\frac{\mathcal{H}^{l+k} (\utilde_j \gtildeunder^l)(t_0)}
  {(-1)^k 2^{l+k} l! (l+k)!},
\]
$\gtildeunder(t)= \gtilde(t)-\gtilde(t_0)-\frac{1}{2} (t-t_0) \gtilde''(t) (t-t_0)^T$,
$\mathcal{H}$ is the differential operator \\
$-\sum_{0\le r,s < d} (\gtilde''(t_0)^{-1})_{r,s} \del_r\del_s$, and $t_0=0$.
In every term of $L_k(\utilde_j,\gtilde)$ the total number of derivatives of
$\utilde$ and of $\gtilde''$ is at most $2k$.
\end{theorem}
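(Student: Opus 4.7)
The strategy is to reduce $F_{n\alpha}$ to a $(d-1)$-dimensional Fourier--Laplace integral through Cauchy's formula and a residue computation, and then invoke the asymptotic expansion stated in Section~\ref{sec:analysis}. By Cauchy's formula,
\[
F_{n\alpha} = \frac{1}{(2\pi\mi)^d}\int_{C(\rho)} \frac{F(x)}{x^{n\alpha+1}}\,dx
\]
for any polyradius $\rho$ componentwise below $|c|$. As in \cite{PeWi2002}, strict minimality of $c$ lets me iterate the integral, fix $\hat x$ on a polycircle just outside $C(\hat c)$, and deform the $x_d$-contour from radius just below $|c_d|$ to radius $|c_d|+\delta$, picking up the residue at $y=h(\hat x)$ and leaving a remainder bounded by $r^n$ for some $r<1$.

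Because the pole has order $p$, I compute the residue by applying Leibniz's rule to $\partial_y^{p-1}\big[(y-h(\hat x))^p F(\hat x,y)\cdot y^{-n\alpha_d-1}\big]$. The $j$-th term contributes, at $y=h(\hat x)$, the factor $\partial_y^j\big[(y-h)^p F\big]$ (which is $(-h(\hat x))^{p-j}\,u_j(\hat x)$ by definition) times $(-1)^{p-1-j}(n\alpha_d+1)^{\overline{p-1-j}}h(\hat x)^{-n\alpha_d-p+j}$; the signs and powers of $h$ combine to give
\[
F_{n\alpha} = \frac{1}{(2\pi\mi)^{d-1}}\int_{C(\hat c)} \sum_{j=0}^{p-1}\frac{(n\alpha_d+1)^{\overline{p-1-j}}}{(p-1-j)!\,j!}\,u_j(\hat x)\,h(\hat x)^{-n\alpha_d}\,\hat x^{-n\hat\alpha-1}\,d\hat x + O(r^n).
\]
Parametrizing by $\hat x=E(t)$, the product $\hat x^{-n\hat\alpha}h(\hat x)^{-n\alpha_d}$ becomes exactly $c^{-n\alpha}\me^{-n\alpha_d\gtilde(t)}$ by the very definition of $\gtilde$, so each summand is a Fourier--Laplace integral with phase $\gtilde$, amplitude $\utilde_j$, and large parameter $n\alpha_d$. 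The geometric hypotheses translate cleanly: strict minimality gives $\operatorname{Re}\gtilde(t)>0$ for $t\neq 0$; the critical-point equations say $\nabla\gtilde(0)=0$; nondegeneracy says $\det\gtilde''(0)\neq 0$; and isolation gives $t=0$ as the unique critical point nearby. A cut-off argument shrinks the domain to a small ball around $0$ (the outside contributes $O(\me^{-cn})$), and Theorem of Section~\ref{sec:analysis} produces the expansion with coefficients precisely $L_k(\utilde_j,\gtilde)$. Summing over $j$ yields $(\star)$.

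The hard part is the uniformity in $\alpha$. The leading scale $n^{p-1-(d-1)/2}$ is transparent because $(n\alpha_d+1)^{\overline{p-1-j}}$ is polynomial in $n\alpha_d$, but the Fourier--Laplace error constants depend on finite-order $C^k$-seminorms of $\gtilde$ and $\utilde_j$, on the size of $\gtilde''(0)^{-1}$, and on the radius of the neighborhood on which $t=0$ is isolated and $\operatorname{Re}\gtilde$ is bounded below off a small ball. A continuity argument for the implicit function $h$, the critical equations, and the Hessian, over compact subsets of $\cproj^{d-1}$ where the hypotheses persist, yields the required uniform control. The residue step's sign and combinatorial bookkeeping, along with the identification of the $L_k$ with the coefficients produced by the theorem in Section~\ref{sec:analysis}, are routine but lengthy verifications.
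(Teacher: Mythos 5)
Your proposal is correct and follows essentially the same route as the paper: Cauchy's integral formula plus strict minimality to move the $x_d$-contour and pick up a residue (the paper's Lemma~\ref{integral}), Leibniz's rule applied to the order-$p$ pole to get the $u_j$ and rising-factorial factors (Lemma~\ref{FL_integral}), the change of variables $\hat x=E(t)$ to produce the Fourier--Laplace integrals with phase $\gtilde$ and parameter $n\alpha_d$, verification that strict minimality/criticality/isolation/nondegeneracy yield the hypotheses of Theorem~\ref{Hormander} (Lemma~\ref{gtilde}), and a smooth cutoff before applying that theorem. The remark on uniformity in $\alpha$ is not part of Theorem~\ref{asymptotics} itself but belongs to Proposition~\ref{uniform}; otherwise the sign/combinatorics bookkeeping and the identification of the $L_k$ are exactly as in the paper.
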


In the case $d=2$ we can drop the nondegeneracy hypothesis.

\begin{theorem}\label{asymptotics_degenerate}
Let $d=2$ and $\alpha\in\pnats^d$. 
If $c\in \sing$ is strictly minimal, smooth with $c_d\del_d H(c) \neq 0$, 
critical and isolated for $\alpha$, and $v\ge 2$ is least such that 
$\gtilde^{(v)}(0)\neq 0$, then for all $N\in\nats$,
\begin{align*}
F_{n \alpha} \tag{$\dagger$}
=&
c^{-n \alpha } \Bigg[
\frac{(a \alpha_d n)^{-1/v}}{\pi v}
\sum_{j=0}^{p-1} \sum_{k=0}^{N-1} 
\frac{ (\alpha_d n+1)^{\overline{p-1-j}}}
{(p-1-j)!j!} (\alpha_d n)^{-2k/v} L^\text{even}_k(\utilde_j, \gtilde) \\
 &
+O\left(n^{p-1-(2N+1)/v}\right) \Bigg],
\end{align*}
as $n\to\infty$ for $v$ even and
\begin{align*}
F_{n \alpha} \tag{$\ddagger$}
=&
c^{-n \alpha } \Bigg[
\frac{(|a| \alpha_d n)^{-1/v}}{2\pi v}
\sum_{j=0}^{p-1} \sum_{k=0}^{N-1} 
\frac{ (\alpha_d n+1)^{\overline{p-1-j}}}
{(p-1-j)!j!} (\alpha_d n)^{-k/v} L^\text{odd}_k(\utilde_j, \gtilde) \\
 &
+O\left(n^{p-1-(N+1)/v}\right)
\Bigg],
\end{align*}
as $n\to\infty$ for $v$ odd.

Here
\begin{align*}
L^\text{even}_k(\utilde_j,\gtilde)
=&
\sum_{l=0}^{2k} 
\frac{(-1)^l \Gamma(\frac{2k+vl+1}{v})}{l!(2k+vl)!}
\left(a^{-1/v} \frac{d}{dt} \right)^{2k+vl} (\utilde_j\gtildeunder^l)(t_0), \\
L^\text{odd}_k(\utilde_j,\gtilde)
=&
\sum_{l=0}^k 
\frac{(-1)^l \Gamma(\frac{k+vl+1}{v})}{l!(k+vl)!}
\left( \zeta^{k+vl+1} +(-1)^{k+vl}\zeta^{-(k+vl+1)} \right) \\
 &
\times\left( |a|^{-1/v} \mi\sgn a \frac{d}{dt} \right)^{k+vl}
(\utilde_j\gtildeunder^l)(t_0),
\end{align*}
$\gtildeunder(t)= \gtilde(t) -\gtilde(t_0) -a (t-t_0)^v$,
$a= \gtilde^{(v)}(t_0)/v!$, $\zeta = \me^{\mi\pi/(2v)}$, and $t_0=0$.
In every term of $L^\text{even}_k(u,g)$ the total number of derivatives of $u$
and $g^{(v)}$ is at most $2k$, and in $L^\text{odd}_k(u,g)$ at most $k$.
\end{theorem}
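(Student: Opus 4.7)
The plan is to follow the same outline as for Theorem~\ref{asymptotics}: reduce $F_{n\alpha}$ to a single oscillatory integral by residue extraction, then apply an appropriate one-dimensional Fourier-Laplace asymptotic result. The nondegeneracy hypothesis enters only in the second step, so for $d=2$ with a degenerate phase I simply replace the ``quadratic phase'' Fourier-Laplace result by the ``degree-$v$ phase'' version promised in Section~\ref{sec:analysis}.

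First, by the Cauchy integral formula write
\begin{equation*}
F_{n\alpha}
= \frac{1}{(2\pi\mi)^2} \oint_{|w|=|c_1|-\epsilon} \oint_{|y|=|c_d|-\epsilon}
\frac{F(w,y)}{w^{n\alpha_1+1} y^{n\alpha_d+1}} \, dy\, dw.
\end{equation*}
Strict minimality and smoothness let me enlarge the inner contour outward past the single smooth pole $y=h(w)$ (of order $p$), whose Laurent coefficients in $(y-h(w))$ are, up to a power of $-h(w)$, the $u_j(w)$. The residue contribution is a finite sum over $0\le j\le p-1$ of integrals of the form
\begin{equation*}
\frac{1}{2\pi\mi} \oint_{|w|=|c_1|} \frac{u_j(w) (n\alpha_d+1)^{\overline{p-1-j}}}{j!\,(p-1-j)!} \, h(w)^{-n\alpha_d}\, w^{-n\alpha_1-1} dw,
\end{equation*}
and the remainder from the enlarged outer contour is exponentially smaller than any polynomial in $1/n$, again by strict minimality on the full polycircle. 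Parameterizing $w=c_1\me^{\mi t}$, invoking the identity $\log(\htilde(t)/c_d) + \mi(\alpha_1/\alpha_d)t = \gtilde(t)$, and pulling out $c^{-n\alpha}$ converts each residue integral into
\begin{equation*}
c^{-n\alpha}\cdot\frac{1}{2\pi}\int_{-\pi}^{\pi} \frac{\utilde_j(t)(n\alpha_d+1)^{\overline{p-1-j}}}{j!\,(p-1-j)!} \me^{-n\alpha_d\gtilde(t)}\,dt.
\end{equation*}

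Next, split the $t$-integral into a small interval $[-\delta,\delta]$ and its complement. Because $c$ is strictly minimal, $\mathrm{Re}\,\gtilde(t)>0$ for $t\ne 0$ on the circle, so the tail $|t|>\delta$ contributes an error that decays faster than any negative power of $n$. On $[-\delta,\delta]$ write $\gtilde(t)=at^v + \gtildeunder(t)$ with $\gtildeunder$ vanishing to order $v+1$. The Fourier-Laplace theorem from Section~\ref{sec:analysis} handles integrals of the form $\int_{-\delta}^{\delta} A(t)\me^{-\lambda(at^v+\gtildeunder(t))}\,dt$: one expands $\me^{-\lambda\gtildeunder(t)}=\sum_l(-\lambda\gtildeunder(t))^l/l!$ to order $N$, Taylor-expands the amplitude, and integrates each term against $\me^{-\lambda at^v}$. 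After the substitution $s=\lambda^{1/v}t$ the residual integrals become Gamma-function evaluations, producing the $\Gamma((2k+vl+1)/v)$ and $\Gamma((k+vl+1)/v)$ factors that appear in $L_k^{\text{even}}$ and $L_k^{\text{odd}}$. The even-versus-odd dichotomy arises because for even $v$ the integrand $\me^{-\lambda at^v}$ is, after a contour rotation by $(1/v)\arg a$, of pure Laplace type and yields a single Gamma factor with the prefactor $a^{-1/v}$; whereas for odd $v$ the half-lines $t>0$ and $t<0$ must be rotated separately, producing the combination $\zeta^{k+vl+1}+(-1)^{k+vl}\zeta^{-(k+vl+1)}$ with $\zeta=\me^{\mi\pi/(2v)}$.

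The main technical obstacle is keeping the error estimates uniform while simultaneously truncating the Taylor series of $\gtildeunder$, the amplitude $\utilde_j$, and the expansion of $\me^{-\lambda\gtildeunder}$ in lockstep with $N$, so that every omitted term contributes at most $O(n^{p-1-(2N+1)/v})$ (respectively $O(n^{p-1-(N+1)/v})$); this bookkeeping is precisely what the Fourier-Laplace theorem of Section~\ref{sec:analysis} packages. A secondary subtlety is verifying that the residue calculation produces exactly the factor $(n\alpha_d+1)^{\overline{p-1-j}}/((p-1-j)!\,j!)$ with the correct signs, after absorbing the powers $(-\htilde(t))^{p-j}$ that arise in the Laurent expansion of $1/y^{n\alpha_d+1}$ about $y=h(w)$.
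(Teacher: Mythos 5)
Your proposal is correct and follows essentially the same route as the paper: reduce $F_{n\alpha}$ to a one-dimensional Fourier-Laplace integral via Lemmas~\ref{integral} and \ref{FL_integral}, use the positivity and stationarity facts of Lemma~\ref{gtilde} to localize near $t=0$, and then substitute the degenerate-phase result (Theorem~\ref{Elst}) for the nondegenerate one (Theorem~\ref{Hormander}). The only small technical point to tidy up is that Theorem~\ref{Elst} requires a $C_c^\infty$ amplitude, so the localization must be done with a smooth cutoff $\kappa$ (as in the paper's proof of Theorem~\ref{asymptotics}) rather than a hard truncation at $[-\delta,\delta]$, and one should note explicitly that Lemma~\ref{gtilde} also gives uniqueness of the stationary point on $\supp\kappa$, which is among the hypotheses of Theorem~\ref{Elst}.
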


To prove Theorems~\ref{asymptotics} and \ref{asymptotics_degenerate}
we follow the same general approach as in \cite{PeWi2002} and summarized in the
following steps: 
(1) use Cauchy's integral formula and strict minimality to express
$c^{n \alpha} F_{n \alpha}$ as a $d$-variate contour integral over a
contour almost touching $c$;
(2) expand the contour across $c$ and use Cauchy's residue theorem along with 
the smoothness of $c$ to express the innermost integral as a residue;
(3) calculate the residue explicitly, and take the resulting $(d-1)$-variate 
contour integral and change to real coordinates to get a Fourier-Laplace 
integral;
(4) use theorems from analysis (see Section~\ref{sec:analysis}) to 
approximate the integral asymptotically.


\begin{lemma}[{\cite[proof of Lemma 4.1]{PeWi2002}, Steps 1 \& 2}]\label{integral}  
Let $\alpha\in\pnats^d$ and $c\in \sing$.
If $c$ is strictly minimal and smooth with $c_d \del_d H(c) \neq 0$, 
then there exists $\epsilon\in(0,1)$ and a polydisc neighborhood $D$ of $\chat$ 
such that
\[
    c^{n\alpha} F_{n \alpha}
    =  
    c^{n\alpha} (2\pi\mi)^{1-d} 
    \int_{X} \frac{-R(w)}{w^{n \hat{\alpha}+1}}  dw
    + O\left(\epsilon^n\right)
\]
as $n\to\infty$, where  $X=D\cap C(\chat)$ and $R(w)$ is the residue of 
$y \mapsto F(w,y) y^{-\alpha_d n-1}$ at $h(w)$.
\end{lemma}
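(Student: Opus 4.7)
The plan is to carry out steps~(1) and~(2) of the outline preceding the lemma: first express $F_{n\alpha}$ by Cauchy's integral formula on a polycircle inside the domain of convergence, and then apply Cauchy's residue theorem to extract a $(d-1)$-dimensional integral up to an exponentially small error. Throughout I write the $d$-dimensional integration variable as $x=(w,y)$ with $w\in\comps^{d-1}$ and $y\in\comps$.

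For step~(1), I pick a polyradius $r$ with $r_j<|c_j|$ for all $j$, small enough that the Maclaurin series of $F$ converges on $D(r)$; Cauchy's formula gives
\[
F_{n\alpha} = (2\pi\mi)^{-d}\int_{C(r)} \frac{F(x)}{x^{n\alpha+1}}\,dx.
\]
Strict minimality yields $\sing\cap\cl{D(c)}=\{c\}$, and since $c\notin D(c)$ the function $F$ is in fact holomorphic throughout $D(c)$. So I may deform the contour to any polycircle $C(r')$ with $r'_j<|c_j|$ without changing its value. I treat the resulting integral as iterated, first over $w$ on the polycircle $\{|w_j|=r'_j: j<d\}$ and then over $y$ on $|y|=r'_d$. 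Next I push the $w$-contour outward to $C(\chat)$: for each fixed $y$ with $|y|=r'_d<|c_d|$, any singularity of $w\mapsto F(w,y)$ in $\{|w_j|\le|c_j|\}$ would furnish a point of $\sing\cap\cl{D(c)}$ distinct from $c$, contradicting strict minimality.

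For step~(2), I push the $y$-contour outward across the circle $|y|=|c_d|$. Smoothness of $c$ together with the implicit function theorem yields the bounded neighborhood $W$ of $\chat$ and the parametrization $y=h(w)$ of $\sing$ on $W$; strict minimality then forces $|h(w)|>|c_d|$ for all $w\in(C(\chat)\cap W)\setminus\{\chat\}$, since otherwise $(w,h(w))$ would be a point of $\sing\cap\cl{D(c)}$ other than $c$. For $w\in C(\chat)$ outside a sufficiently small polydisc $D\subseteq W$ centered at $\chat$, compactness combined with strict minimality yields some $\epsilon>0$ such that $F(w,\cdot)$ is regular on the closed disc $|y|\le|c_d|+\epsilon$. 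Setting $X=D\cap C(\chat)$, the deformation of the $y$-contour from $|y|=r'_d$ to $|y|=|c_d|+\epsilon$ picks up $-2\pi\mi$ times the residue at $y=h(w)$ exactly when $w\in X$, producing the claimed main term $(2\pi\mi)^{1-d}\int_X -R(w)/w^{n\hat\alpha+1}\,dw$.

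It remains to bound the leftover $d$-fold integral over $C(\chat)\times\{|y|=|c_d|+\epsilon\}$. On this contour $|w^{n\hat\alpha+1}y^{n\alpha_d+1}|=|c_1|^{n\alpha_1+1}\cdots|c_{d-1}|^{n\alpha_{d-1}+1}(|c_d|+\epsilon)^{n\alpha_d+1}$ and $|F|$ is uniformly bounded by compactness, so multiplying by $c^{n\alpha}$ yields a bound of order $\bigl(|c_d|/(|c_d|+\epsilon)\bigr)^{n\alpha_d}$, which is $O(\epsilon_0^n)$ for some $\epsilon_0\in(0,1)$. The main obstacle is the geometric bookkeeping for the $y$-contour deformation: one must combine strict minimality with smoothness of $c$ to verify that the only singularities of $y\mapsto F(w,y)$ crossed by the enlarged surface lie on the local parametrization $y=h(w)$ with $w$ in a polydisc neighborhood of $\chat$, with no spurious contributions arising from arcs of $C(\chat)$ far from $\chat$.
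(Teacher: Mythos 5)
Your route---Cauchy's formula on a small polycircle, pushing the $w$-torus out to $C(\chat)$, then expanding the $y$-circle past $|c_d|$ to trade the inner integral for a residue plus an exponentially small remainder---is exactly the argument the paper relies on (it gives no proof of its own, citing Steps 1 and 2 of the proof of Lemma 4.1 in \cite{PeWi2002}). Your Step 1 and your final size estimate for the leftover integral are fine. The problem is the step you yourself label ``the main obstacle'' and then leave unverified: it is not routine bookkeeping, and as you have set it up it is actually false. With your order of choices (first the polydisc $D$, then $\epsilon$ obtained by compactness for $w\in C(\chat)\setminus D$), the claim that deforming to the single circle $|y|=|c_d|+\epsilon$ ``picks up the residue at $h(w)$ exactly when $w\in X$'' fails in general: strict minimality only gives $|h(w)|>|c_d|$ for $w\neq\chat$, and $|h(w)|\to|c_d|$ as $w\to\chat$, so for $w\in X$ near $\del D$ the pole $h(w)$ may lie \emph{outside} the circle $|y|=|c_d|+\epsilon$ (no residue is collected although your main term includes one), while if instead $\epsilon$ is fixed first, poles are crossed for $w$ in a collar just outside $D$ although your main term omits them. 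Moreover, for $w\in X$ you never exclude zeros of $H(w,\cdot)$ \emph{other} than $h(w)$ in the annulus $|c_d|<|y|\le|c_d|+\epsilon$; strict minimality controls only $|y|\le|c_d|$.

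The repair is standard but must be supplied. Since $\del_d H(c)\neq0$, $H(\chat,\cdot)\not\equiv0$, so its zeros are isolated and strict minimality lets you fix $\epsilon>0$ with $c_d$ the only zero of $H(\chat,\cdot)$ in $|y|\le|c_d|+2\epsilon$. Then choose $D$ (with $\cl{D}\subset W$) so small that for every $w\in\cl{D}$ the only zero of $H(w,\cdot)$ in $|y|\le|c_d|+\epsilon$ is the simple zero $h(w)$ (implicit function theorem near $c_d$, a compactness argument for the part of the disc with $|y-c_d|\ge\delta$) and moreover $|h(w)|<|c_d|+\epsilon$, by continuity of $h$. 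Finally choose, by your compactness argument, a possibly smaller radius $|c_d|+\epsilon_1$ for the deformation over $w\in C(\chat)\setminus D$. The two leftover integrals, over $X\times\{|y|=|c_d|+\epsilon\}$ and $(C(\chat)\setminus D)\times\{|y|=|c_d|+\epsilon_1\}$, are then exponentially small after multiplying by $c^{n\alpha}$, exactly as in your last paragraph, which yields the lemma with any $\epsilon\in(\max(|c_d|/(|c_d|+\epsilon_1),\,|c_d|/(|c_d|+\epsilon)),1)$, say. (Alternatively, keep the single radius $|c_d|+\epsilon_1$ everywhere and observe that those $w\in X$ whose pole is not crossed satisfy $|h(w)|\ge|c_d|+\epsilon_1$, so the residue terms your main term wrongly includes for them are themselves $O\bigl(n^{p-1}(|c_d|/(|c_d|+\epsilon_1))^{\alpha_d n}\bigr)$ and can be absorbed into the error.) Without one of these arguments, the identity asserted in the lemma is not established at precisely the point where strict minimality and smoothness have to interact.
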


\begin{lemma}[Step 3]\label{FL_integral} 
In the previous lemma,
\[
R(w)
= 
-\sum_{j=0}^{p-1} 
\frac{(\alpha_d n + 1)^{\overline{p-1-j}}}{(p-1-j)! j!} 
h(w)^{-\alpha_d n} u_j(w).
\]
Thus
\[
c^{n \alpha}F_{n \alpha}
=
(2\pi)^{1-d}
\sum_{j=0}^{p-1} 
\frac{(\alpha_d n + 1)^{\overline{p-1-j}}}{(p-1-j)! j!}
\int_{\Xtilde}  \utilde_j(t) \me^{-\alpha_d n \gtilde(t)} dt 
+ O(\epsilon^n),
\]
as $n \to \infty$, where $\Xtilde = E^{-1}(X)$.
\end{lemma}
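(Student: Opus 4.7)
The statement splits cleanly into two parts: computing the residue $R(w)$ in closed form, and then converting the contour integral in Lemma~\ref{integral} into the Fourier--Laplace form by the change of variables $w = E(t)$. I would treat them in that order.

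For the residue, the assumptions give that $H(w,y)$ has a simple zero at $y=h(w)$ (since $\del_d H(c) \neq 0$ and $h$ parametrizes $\sing$ locally by the implicit function theorem), so $F(w,y)y^{-\alpha_d n - 1}$ has a pole of order exactly $p$ at $y=h(w)$. The standard formula for a pole of order $p$ gives
\[
R(w) = \frac{1}{(p-1)!}\lim_{y\to h(w)}\frac{\del^{p-1}}{\del y^{p-1}}\Bigl[(y-h(w))^p F(w,y)\, y^{-\alpha_d n-1}\Bigr].
\]
I would then apply the Leibniz rule to split this $(p-1)$th derivative into a sum over $j$ of derivatives of $\phi(w,y):=(y-h(w))^p F(w,y)$ times derivatives of $y^{-\alpha_d n-1}$. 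The $k$th derivative of $y^{-\alpha_d n-1}$ is $(-1)^k(\alpha_d n+1)^{\overline{k}}y^{-\alpha_d n-1-k}$, evaluated at $y=h(w)$; and by the definition of $u_j$ in Definition~\ref{functions}, $\del_y^j\phi(w,h(w)) = (-1)^{p-j}h(w)^{p-j}u_j(w)$. Substituting both and collecting the signs $(-1)^{p-1-j}(-1)^{p-j} = -1$ together with $\binom{p-1}{j}/(p-1)! = 1/(j!(p-1-j)!)$ yields exactly the stated formula for $R(w)$. This bookkeeping is routine but is the only place where an error is easy to make.

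For the contour-to-Fourier-Laplace conversion, I would parametrize $X = D\cap C(\chat)$ by $w = E(t)$ with $t\in \Xtilde = E^{-1}(X)$. The Jacobian is $dw = \mi^{d-1}w_1\cdots w_{d-1}\, dt$, so $dw/w^{n\hat\alpha + 1} = \mi^{d-1}w^{-n\hat\alpha}dt$, and on the polycircle $w^{-n\hat\alpha} = c^{-n\hat\alpha}\exp\bigl(-\mi n \sum_{m<d}\alpha_m t_m\bigr)$. Combining this with $h(w)^{-\alpha_d n} = \htilde(t)^{-\alpha_d n}$ and the identity $\me^{-\alpha_d n \gtilde(t)} = (\htilde(t)/c_d)^{-\alpha_d n}\exp(-\mi n \sum_{m<d}\alpha_m t_m)$ (which is immediate from the definition of $\gtilde$ and $\htilde(0)=c_d$), the factor $c_d^{-\alpha_d n}$ combines with $c^{-n\hat\alpha}$ to cancel the prefactor $c^{n\alpha}$ in Lemma~\ref{integral}. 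The remaining constants are $(2\pi\mi)^{1-d}\cdot(-1)\cdot\mi^{d-1} = (2\pi)^{1-d}$ after noting that the formula for $R(w)$ carries an explicit minus sign that cancels the $-R(w)$ in the integrand.

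The main obstacle, such as it is, is purely computational: tracking the signs, the rising factorials, and the powers of $\mi$ through the residue calculation and change of variables without error. Once the residue formula is in hand and the parametrization $E$ is used, everything collapses to the claimed identity, and the $O(\epsilon^n)$ error term is inherited directly from Lemma~\ref{integral}.
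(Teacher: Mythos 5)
Your proposal follows exactly the same route as the paper: compute the residue of a pole of order $p$ via Leibniz, match the $j$th derivative of $(y-h(w))^p F(w,y)$ to $u_j$ through Definition~\ref{functions}, then change variables $w = E(t)$ to turn the contour integral from Lemma~\ref{integral} into the Fourier--Laplace form, with the $\mi^{d-1}$ from the Jacobian cancelling the $\mi^{1-d}$ from $(2\pi\mi)^{1-d}$ and the $c_d^{n\alpha_d}\cdot c_d^{\alpha_d n}$ factor absorbed into the definition of $\gtilde$. One cosmetic slip: the displayed equality $(2\pi\mi)^{1-d}\cdot(-1)\cdot\mi^{d-1} = (2\pi)^{1-d}$ is literally false (its left side is $-(2\pi)^{1-d}$); the intended and correct accounting is that the leading $-$ in the formula for $R(w)$ cancels against the $-R(w)$ in the integrand, and then $(2\pi\mi)^{1-d}\,\mi^{d-1} = (2\pi)^{1-d}$, which is what the paper does.
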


\begin{proof} 
This is a straightforward calculation.
Let $w\in X$.
Since $c$ is smooth, $h(w)$ is a simple zero of $y\mapsto H(w,y)$ and so a pole
of order $p$ of $y\mapsto F(w,y)$.
Since $h$ is nonzero on $W$, $h(w)$ is also a pole of order $p$ of 
$y\mapsto F(w,y) y^{-\alpha_d n -1}$.
Thus
\begin{align*}
R(w)
=&
\lim_{y \to h(w)} \frac{1}{(p-1)!} 
\left( \frac{\del}{\del y} \right)^{p-1}
\Big( (y - h(w))^p F(w,y) y^{-\alpha_d n -1} \Big) \\
=&
-\frac{1}{(p-1)!}\lim_{y \to h(w)} \sum_{j=0}^{p-1} 
\binom{p-1}{j} 
\left( \frac{\del}{\del y} \right)^j \Big( (y - h(w))^p \, F(w,y) \Big)\\
 &
\times
(\alpha_d n + 1)^{\overline{p-1-j}}  (-1)^{p-j} y^{-\alpha_d n -p +j} \\
=&
-\sum_{j=0}^{p-1}
 \frac{(\alpha_d n + 1)^{\overline{p-1-j}}}{(p-1-j)!j!}
h(w)^{-\alpha_d n} \lim_{y \to h(w)} (-y)^{-p+j} \left( \frac{\del}{\del y} \right)^j
\Big( (y - h(w))^p \, F(w,y) \Big),
\end{align*}
from which the first identity follows by definition of $u_j$.
Thus
\begin{align*}
 &
c^{n \alpha} (2\pi\mi)^{1-d}
\int_{X} \frac{-R(w)}{w^{n \hat{\alpha}+1}} dw \\
=&
c^{n \alpha} (2\pi\mi)^{1-d} \int_{X} \frac{1}{w^{n\alphahat  + 1}}
\sum_{j=0}^{p-1} 
\frac{(\alpha_d n + 1)^{\overline{p-1-j}}}{(p-1-j)! j!}
h(w)^{- n \alpha_d} u_j(w) dw \\
=&
(2\pi\mi)^{1-d}
\sum_{j=0}^{p-1}
\frac{(\alpha_d n + 1)^{\overline{p-1-j}}}{(p-1-j)! j!} 
\int_{X} \frac{\chat^{n \alphahat}}{w^{n \alphahat}} 
u_j(w) \left( \frac{h(w)}{h(\chat)} \right)^{-\alpha_d n} 
\frac{dw}{\prod_{m=1}^{d-1} w_m} \\
=&
(2\pi)^{1-d}
\sum_{j=0}^{p-1} 
\frac{(\alpha_d n + 1)^{\overline{p-1-j}}}{(p-1-j)! j!} 
\int_{\Xtilde} \prod_{m=1}^{d-1} \me^{-\mi \alpha_m n t_m} 
\utilde_j(t) \left( \frac{\htilde(t)}{\htilde(0)} \right)^{-\alpha_d n} dt \\
 & 
\text{(via the change of variables $w = E(t)$)} \\
=&
(2\pi)^{1-d}
\sum_{j=0}^{p-1}
\frac{(\alpha_d n + 1)^{\overline{p-1-j}}}{(p-1-j)! j!}
\int_{\Xtilde} \utilde_j(t) \,\me^{-\alpha_d n \gtilde(t)} dt,
\end{align*}
which with Lemma~\ref{integral} proves the stated formula for 
$c^{n\alpha}F_{n\alpha}$.
\end{proof}

\begin{remark}
In the case $d=1$ Lemma~\ref{FL_integral} simplifies:
$h$, $u_j$, and $R$ become 0-ary functions, that is, constants ($h$ becomes $c$),
and there is no integral.
Thus we arrive at the following known result.  

If $c\in\sing$ is strictly minimal and smooth, then there exists 
$\epsilon\in(0,1)$ such that
\[
F_n 
= 
c^{-n}\left[
\sum_{j=0}^{p-1} 
\frac{(n + 1)^{\overline{p-1-j}}}{(p-1-j)! j!} u_j 
+ O\left(\epsilon^n\right)
\right],
\]
as $n\to\infty$, where
\[
u_j = \lim_{x\to c} (-x)^{-p+j} \left( \frac{\del}{\del x} \right)^j
\Big( (x-c)^p F(x) \Big).
\]
Moreover, if $c\in\sing$ is finitely minimal and smooth and every point of 
$\sing\cap C(c)$ is smooth, then the asymptotic expansion of $F_n$ is the sum of 
the expansions around each point of $\sing\cap C(c)$.
\end{remark}

Before proceeding to Step 4 we collect a few technical facts about $\gtilde$.

\begin{lemma}\label{gtilde}
Let $\alpha\in\pnats^d$ and $c\in\sing$.
If $c$ is strictly minimal, smooth with $c_d \del_d H(c)\neq 0$, 
and critical and isolated for $\alpha$, then for all $t\in\Xtilde$ we have 
$\Re\gtilde(t) \geq 0$ with equality only at $t=0$ 
and $\gtilde'(t) = 0$ iff $t=0$.
\end{lemma}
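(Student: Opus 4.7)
My plan is to establish the four sub-claims of the lemma in order: (i) $\Re\gtilde(t)\ge 0$ on $\Xtilde$; (ii) equality in (i) forces $t=0$; (iii) $\gtilde'(0)=0$; and (iv) conversely $\gtilde'(t)=0$ implies $t=0$. Claims (i) and (ii) come directly from strict minimality of $c$; claim (iii) is a one-line chain-rule computation using the implicit function theorem together with the criticality equations; and claim (iv) uses the isolation hypothesis, requiring one further shrinking of the polydisc $D$ supplied by Lemma~\ref{integral}.

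For (i) and (ii), first note that
\[
\Re\gtilde(t) = \log\frac{|\htilde(t)|}{|c_d|},
\]
since the $\mi\sum_m(\alpha_m/\alpha_d) t_m$ summand and the imaginary part of the logarithm both contribute only to $\Im\gtilde$. For any $t\in\Xtilde$ the point $(E(t),\htilde(t))=(E(t),h(E(t)))$ lies in $\sing$, and its first $d-1$ coordinates have moduli $|c_1|,\ldots,|c_{d-1}|$. If $|\htilde(t)|<|c_d|$ then strict minimality of $c$ would be violated, so (i) holds. If equality obtains, then $(E(t),\htilde(t))\in\sing\cap\cl{D(c)}=\{c\}$ by strict minimality, whence $E(t)=\chat$; since $\Xtilde\subset[-1,1]^{d-1}$ and $E$ is injective there, we conclude $t=0$, which is (ii).

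For (iii), implicit differentiation of $H(w,h(w))\equiv 0$ gives $\del_m h(\chat)=-\del_m H(c)/\del_d H(c)$ for $m<d$, and the chain rule combined with the definition of $\gtilde$ produces
\[
\frac{\del\gtilde}{\del t_m}(0)
= \frac{\mi c_m\,\del_m h(\chat)}{c_d} + \mi\frac{\alpha_m}{\alpha_d}
= -\mi\,\frac{c_m\del_m H(c)}{c_d\del_d H(c)} + \mi\frac{\alpha_m}{\alpha_d},
\]
which vanishes for every $m<d$ by the criticality equations $\alpha_m^{-1}c_m\del_m H(c) = \alpha_d^{-1}c_d\del_d H(c)$. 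Repeating this chain-rule calculation at a general $t\in\Xtilde$ shows that $\gtilde'(t)=0$ is equivalent to $(E(t),\htilde(t))$ being a critical point of $H$ for $\alpha$. By isolation of $c$, I can shrink $D$ so that $(E(t),\htilde(t))$ ranges within a neighbourhood of $c$ containing no other critical point; then $(E(t),\htilde(t))=c$ and the argument from (ii) gives $t=0$, establishing (iv).

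The only potentially troublesome point is bookkeeping: $D$ must be small enough simultaneously to play its role in Lemma~\ref{integral}, to be contained in $W$ so that $h$ is defined on $D$, and to isolate $c$ among critical points of $H$ for $\alpha$. All three conditions are open and satisfied by $c$ itself, so a single shrinking of $D$ enforces them at once, and this shrinking is compatible with the $O$-term bounds of the earlier lemmas.
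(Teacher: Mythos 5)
Your proof is correct and follows essentially the same route as the paper's: strict minimality of $c$ gives $\Re\gtilde\ge 0$ with equality only at $t=0$, and the implicit-function-theorem chain rule turns $\gtilde'(t)=0$ into the statement that $(E(t),\htilde(t))$ is critical for $\alpha$, which isolation reduces to $t=0$. Your explicit remark that $D$ (hence $\Xtilde$) must be shrunk so that $(E(t),\htilde(t))$ stays inside the isolating neighbourhood of $c$ is a detail the paper's proof leaves tacit, and it is worth keeping; otherwise the only difference is that you verify $\gtilde'(0)=0$ separately before treating general $t$, which the paper obtains as a special case of the same equivalence.
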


\begin{proof}
Firstly, $\gtilde(0)=0$ by definition.
Secondly, $\Re\gtilde(t)\geq 0$ with equality only at 0 since $c$ is strictly 
minimal.
Lastly, by the implicit function theorem, 
$\del_m h(w) = -\del_m H(w,h(w)) / \del_d H(w,h(w))$
for all $m<d$ and $w \in W$.
So for $t\in\Xtilde$ we have
\[
\del_m \gtilde(t)
= 
-\mi c_m \me^{\mi t_m} \frac{1}{h(E(t))} 
\frac{\del_m H(E(t),h(E(t)))}{\del_d H(E(t),h(E(t)))} +\mi \frac{\alpha_m}{\alpha_d}.
\]
Therefore $\gtilde'(t) = 0$ iff 
\[
\alpha_m^{-1} \, c_m \, \me^{\mi \, t_m} \, \del_m H(E(t),h(E(t)))  
= \alpha_d^{-1} \, h(E(t)) \, \del_d H(E(t),h(E(t)))
\]
for all $m<d$ iff $(E(t),h(E(t)))$ is critical for $\alpha$
iff $t=0$ since $c$ is isolated for $\alpha$.
\end{proof}

\begin{proof}[Proof of Theorem~\ref{asymptotics} (Step 4)]
By Lemmas \ref{integral} and \ref{FL_integral} there exists $\epsilon\in(0,1)$ 
and an open bounded neighbourhood $\Xtilde$ of $0$ such that
\[
c^{n \alpha}F_{n \alpha}
= 
(2\pi)^{1-d} 
\sum_{j=0}^{p-1} 
\frac{(\alpha_d n + 1)^{\overline{p-1-j}}}{(p-1-j)! j!} I_{j,n}
+O\left(\epsilon^n\right)
\]
as $n\to\infty$, where 
$I_{j,n} = \int_{\Xtilde}  \utilde_j(t) \me^{-\alpha_d n \gtilde(t)} dt$.

Choose $\kappa \in C_c^\infty(\Xtilde)$ such that $\kappa = 1$ on a 
neighbourhood $Y$ of $0$.
Then
\[
I_{j,n} 
= 
\int_{\Xtilde} \kappa(t) \utilde_j(t) \me^{-\alpha_d n \gtilde(t)} dt
+\int_{\Xtilde} (1-\kappa(t)) \utilde_j(t) \me^{-\alpha_d n\gtilde(t)}dt.
\] 
The second integral decreases exponentially as $n\to\infty$ since $\Re\gtilde$ 
is strictly positive on the compact set $\cl{\Xtilde \setminus Y}$ by 
Lemma~\ref{gtilde}.
By Lemma~\ref{gtilde} again and our nondegeneracy hypothesis, we we may apply 
Theorem~\ref{Hormander} with $t_0=0$ to the first integral.
Noting that $L_k(\kappa\utilde_j, \gtilde) = L_k(\utilde_j, \gtilde)$ because 
the derivatives are evaluated at $0$ and $\kappa=1$ in a neighbourhood of $0$,
this gives 
\begin{align*}
I_{j,n}
&=
\me^{-n_d \, \gtilde(0)}
\left( \det\left(\frac{\alpha_d n \, \gtilde''(0)}{2\pi}\right)\right)^{-1/2}
\sum_{k=0}^{N-1} 
(\alpha_d n)^{-k} L_k (\utilde_j, \gtilde)
+ O( (\alpha_d n)^{-(d-1)/2-N}) \\
&=
\left(\frac{\alpha_d n}{2\pi}\right)^{-(d-1)/2}
\left(\det \gtilde''(0) \right)^{-1/2}
\sum_{k=0}^{N-1}
(\alpha_d n)^{-k} L_k(\utilde_j, \gtilde)
+ O( n^{-(d-1)/2-N})
\end{align*}
as $n \to \infty$.
Hence
\begin{align*}
c^{n \alpha}F_{n \alpha}
=&
(2\pi)^{1-d}
\sum_{j=0}^{p-1}
\frac{(\alpha_d n + 1)^{\overline{p-1-j}}}{(p-1-j)! j!} I_{j,n} 
+O\left(\epsilon^n\right) \\
=&
\sum_{j=0}^{p-1} \sum_{k=0}^{N-1}
\frac{ (\alpha_d n+1)^{\overline{p-1-j}}}{(p-1-j)! j!}
\Big( (2\pi \alpha_d n)^{d-1} \det \gtilde''(0) \Big)^{-1/2} 
(\alpha_d n)^{-k} L_k(\utilde_j, \gtilde) \\
 &
+ O\left(n^{p-1-(d-1)/2-N}\right),
\end{align*}
as $n \to \infty$, as desired.
\end{proof}

The proof of Theorem~\ref{asymptotics_degenerate} is similar but uses
Theorem~\ref{Elst} instead of Theorem~\ref{Hormander}.

In the case of finitely minimal points of $\sing$ we simply take an open set $W$
for each finitely minimal point so that $W$ contains no other finitely 
minimal points, repeat the proofs above for each such $W$, and add the 
asymptotic expansions.
Thus we have the following.

\begin{corollary}
Let $\alpha\in\pnats^d$ and $c\in \sing$.
If $c$ is finitely minimal and every point of $\sing\cap C(c)$
satisfies the hypotheses (excluding strict minimality) of 
Theorem~\ref{asymptotics} or Theorem~\ref{asymptotics_degenerate},
then the asymptotic expansion of $F_{n \alpha}$ equals the sum of the 
expansions around each point of $\sing\cap C(c)$.
\end{corollary}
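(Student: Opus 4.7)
The plan is the one sketched by the authors in the paragraph preceding the statement: localize near each of the finitely many points of $\sing\cap C(c)$ and add the expansions. Write $\sing\cap C(c)=\{c^{(1)},\ldots,c^{(M)}\}$; by hypothesis each $c^{(i)}$ satisfies all assumptions of Theorem~\ref{asymptotics} or Theorem~\ref{asymptotics_degenerate} except strict minimality, and so produces local data $h_i$, $\utilde_{i,j}$, $\gtilde_i$ via Definition~\ref{functions}.

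First I would choose pairwise disjoint open polydisc neighbourhoods $U_i$ of the $c^{(i)}$ with $\cl{U_i}\cap\sing\cap C(c)=\{c^{(i)}\}$. Since $\sing$ is closed and $c$ is finitely minimal, the compact set $K=C(c)\setminus\bigcup_i U_i$ is disjoint from $\sing$, so $F$ is holomorphic on a polycircular neighbourhood of $K$. Next I would redo the contour deformation of Lemma~\ref{integral}: Cauchy's formula on an inner polytorus is deformed outward past each $c^{(i)}$, picking up residues only from the branches $y=h_i(w)$. Introducing a smooth partition of unity $1=\sum_{i=0}^M\kappa_i$ on $C(\chat)$ subordinate to the projections of the $U_i$ (with $\kappa_0$ supported near the projection of $K$) yields
\[
c^{n\alpha}F_{n\alpha}=\sum_{i=1}^{M}c^{n\alpha}(2\pi\mi)^{1-d}\int_{X_i}\kappa_i(w)\,\frac{-R_i(w)}{w^{n\alphahat+1}}\,dw+O(\epsilon^n),
\]
where $X_i$ is the $\kappa_i$-supporting slice of $C(\chat)$, $R_i$ is the residue at $h_i(w)$ as in Lemma~\ref{FL_integral}, and the $\kappa_0$-piece contributes $O(\epsilon^n)$ because the contour can be shrunk slightly on $K$.

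For each $i$ separately, the resulting integral is exactly of the form handled by Lemmas~\ref{integral}--\ref{FL_integral} and Step~4 of Theorem~\ref{asymptotics} (or Theorem~\ref{asymptotics_degenerate}), with $\kappa_i$ playing the role of the smooth cutoff $\kappa$. Since $\kappa_i\equiv 1$ in a neighbourhood of $c^{(i)}$ and all derivatives in $L_k$ (resp.\ $L^{\text{even}}_k$, $L^{\text{odd}}_k$) are evaluated at the corresponding $t_0=0$, the $i$-th contribution is precisely the asymptotic expansion centered at $c^{(i)}$. Summing the $M$ expansions and absorbing the exponentially small remainder produces the identity claimed. The only genuine obstacle is the global contour deformation across the finitely many poles lying on the distinguished boundary $C(c)$; once the partition of unity and the local branches are in place, the rest is an $M$-fold repetition of work already done.
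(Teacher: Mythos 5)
Your argument is correct and follows essentially the same route as the paper, whose entire justification is the remark preceding the corollary: take a separate neighbourhood for each of the finitely many points of $\sing\cap C(c)$, repeat the proofs of Lemmas~\ref{integral}--\ref{FL_integral} and Step~4 locally, and add the resulting expansions. Your partition-of-unity bookkeeping and the exponentially small contribution away from the singular points simply make explicit what the authors leave implicit.
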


Finally, we show that the asymptotic formulas for $F_{n\alpha}$ are uniform in
$\alpha$.
This was claimed without proof in \cite{PeWi2002}.

\begin{proposition}\label{uniform}
Let $d\ge 2$ and $K\subseteq\rproj^{d-1}$ be compact. 
Suppose that for all $\alpha\in K$ there exists a unique $c\in\sing$ that is 
strictly minimal, smooth with $c_d \del_d H(c)\neq 0$, critical and isolated for
$\alpha$, and nondegenerate.
Suppose that all these points $c$ lie in a bounded open set  
$V\subset\sing$ such that $x_d \del_d H(x)\neq 0$ for all $x\in V$.
Then for each $N\in\nats$ the big-oh constant of ($\star$) stays bounded as 
$\alpha\in\pnats$ varies and $\alphabar$ stays in $K$.
\end{proposition}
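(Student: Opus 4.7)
The plan is to trace through the proof of Theorem~\ref{asymptotics} and show that the big-oh constant at each step can be bounded by a continuous (hence uniformly bounded) function of $\alphabar$ on the compact set $K$. First, I would establish that the map $\alphabar \mapsto c$ is well-defined and continuous on $K$: the critical point equations together with $H(c)=0$ form a system of $d$ holomorphic equations in the unknown $c$ depending on the parameter $\alphabar$, and the Jacobian of this system with respect to $c$ is essentially governed by $\gtilde''(0)$, which is non-singular by the nondegeneracy hypothesis. The implicit function theorem then gives local continuity of $c(\alphabar)$, and since $c(\alphabar)$ is uniquely determined for each $\alphabar \in K$, this continuity is global. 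The image $\{c(\alphabar) : \alphabar \in K\}$ is a compact subset of $V$, so the germs of $h$, $\utilde_j$, $\gtilde$ and finitely many of their derivatives vary continuously in $\alphabar$.

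Next, I would argue that the constructions of Lemmas~\ref{integral} and~\ref{FL_integral} can be made uniform in $\alpha$. For each $\alphabar_0 \in K$, the polydisc neighbourhood $D$ of $\chat(\alphabar_0)$ and the constant $\epsilon = \epsilon(\alphabar_0) \in (0,1)$ can be chosen to remain valid for all $\alphabar$ in some neighbourhood of $\alphabar_0$; a finite subcover of $K$ yields a common polydisc and a single $\epsilon<1$ valid for all $\alphabar \in K$. The rising-factorial factors are polynomial in $\alpha_d n$, and $\utilde_j$, $\htilde$, $\gtilde$ on the common neighbourhood $\Xtilde$ vary continuously in $\alphabar$, so the reduction to the Fourier-Laplace integrals $I_{j,n}$ carries uniformly bounded prefactors and a uniformly exponentially small error.

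Finally, I would apply a uniform-in-parameter version of Theorem~\ref{Hormander}. Inspecting its proof, the implicit constant for fixed $N$ is controlled by: (i) uniform upper bounds on finitely many derivatives of $\utilde_j$ and $\gtilde$ on a fixed neighbourhood of the origin; (ii) a uniform positive lower bound on $|\det \gtilde''(0)|$; and (iii) a uniform positive lower bound on $\Re \gtilde(t)$ for $t \in \cl{\Xtilde \setminus Y}$. Items (i) and (ii) follow from the continuity in $\alphabar$ established above together with the nondegeneracy hypothesis, since a continuous strictly positive function on a compact set attains a positive minimum. For item (iii), Lemma~\ref{gtilde} gives $\Re \gtilde > 0$ on $\cl{\Xtilde \setminus Y}$ for each fixed $\alphabar$; the joint function $(\alphabar, t) \mapsto \Re \gtilde(t)$ is continuous on the compact set $K \times \cl{\Xtilde \setminus Y}$ and so bounded below by a positive constant. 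A single cutoff $\kappa$ suffices once $Y$ and $\Xtilde$ have been chosen independently of $\alphabar$. The main obstacle is pinning down the explicit dependence of the error constant in Theorem~\ref{Hormander} on its input data; once this is made precise by a careful reading of Section~\ref{sec:analysis}, uniformity of the error term in $(\star)$ follows immediately from compactness of $K$.
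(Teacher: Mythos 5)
Your argument is correct and follows the same overall scheme as the paper's proof (reduce to uniform bounds on the data feeding into Theorem~\ref{Hormander}, then extract these bounds by compactness of $K$), but the crucial step --- continuity of the critical-point map $\alphabar\mapsto c(\alphabar)$ --- is handled by a genuinely different argument. You invoke the implicit function theorem for the system consisting of $H(c)=0$ together with the $d-1$ critical-point equations, using the nondegeneracy hypothesis $\det\gtilde''(0)\neq 0$ to make the Jacobian nonsingular. The paper instead introduces the logarithmic Gauss map $\gamma\colon V\to\cproj^{d-1}$, $x\mapsto[x_1\del_1H(x),\dots,x_d\del_dH(x)]$, notes that $c$ is critical for $\alpha$ iff $\gamma(c)=\alpha$, and then observes that $\gamma$ restricted to $\gamma^{-1}(K)$ is a continuous bijection from a compact space to a Hausdorff space and hence a homeomorphism, giving continuity of $\gamma^{-1}$ directly without appealing to nondegeneracy. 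The paper's topological route is lighter (no verification of a nonsingular Jacobian needed), but your IFT route has the pedagogical benefit of explaining \emph{why} nondegeneracy makes the critical point vary nicely. One further contrast: you explicitly list, as inputs to a ``uniform Hörmander'', upper bounds on derivatives of $\gtilde$, a lower bound on $|\det\gtilde''(0)|$, and a lower bound on $\Re\gtilde$ off the origin; the paper's proof instead invokes the uniformity clause already built into its statement of Theorem~\ref{Hormander} (the constant is controlled once $\|\del^\beta g\|_\infty$ is bounded for $|\beta|\le 3(N+\lceil(d-1)/2\rceil)+1$), so it only verifies your item~(i) and treats the rest as subsumed in that statement. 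Where you write that ``the main obstacle is pinning down the explicit dependence of the error constant'', the resolution is simply to use the dependence already stated in the paper's Theorem~\ref{Hormander}; after the finite-subcover and tube-lemma construction of a common domain $\Xtilde$ (which you also carry out), the needed supremum-norm bounds follow from continuity of $\htilde_j$ and its derivatives and compactness of $K\times\cl{\Xtilde}$.
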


\begin{proposition}\label{uniform_degenerate}
Let $d=2$ and $K\subseteq\rproj^1$ be compact. 
Suppose there exists $v\ge 2$ such that for all $\alpha\in K$ there exists a 
unique $c\in\sing$ that is strictly minimal, smooth with 
$c_d \del_d H(c)\neq 0$, critical and isolated for $\alpha$, and $v$ is the 
least integer greater than 2 such that $g^{(v)}(0)\neq 0$.
Suppose that all these points $c$ lie in a bounded open set  
$V\subset\sing$ such that $x_d \del_d H(x)\neq 0$ for all $x\in V$.
Then for each $N\in\nats$ the big-oh constant of ($\dagger$) and 
($\ddagger$) stays bounded as $\alpha\in\pnats$ varies and $\alphabar$ stays
in $K$. 
\end{proposition}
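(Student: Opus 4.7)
The plan is to revisit the proof of Theorem~\ref{asymptotics_degenerate} and verify that each constant entering the error analysis can be chosen uniformly as $\alphabar$ ranges over the compact set $K$. The hypothesis that all critical points $c=c(\alpha)$ lie in a single bounded open set $V\subset\sing$ on which $x_d\del_d H(x)\neq 0$ allows a single application of the implicit function theorem to produce a holomorphic $h$ on a fixed neighbourhood $W$ of the projection of $\cl{V}$ to the first coordinate, making the domains of $\utilde_j$, $\htilde$, and the $\alpha$-family $\gtilde=\gtilde_\alpha$ independent of $\alpha$. Because compactness of $K$ combined with the critical point equations forces the ratio $\alpha_1/\alpha_d$ to stay in a bounded positive range, the map $\alpha\mapsto\gtilde_\alpha$ is continuous into $C^\infty(\Xtilde)$ for any fixed bounded neighbourhood $\Xtilde\ni 0$.

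The next step is to establish three uniform bounds needed to apply Theorem~\ref{Elst}: (i) $|a_\alpha|=|\gtilde_\alpha^{(v)}(0)|/v!$ is bounded below by a positive constant; (ii) the $C^k$-norms of $\utilde_j$ and $\gtildeunder_\alpha$ on $\Xtilde$ are bounded above; (iii) there exist a fixed neighbourhood $Y\subset\Xtilde$ of $0$ and a constant $\delta>0$ with $\Re\gtilde_\alpha(t)\geq\delta$ for every $t\in\cl{\Xtilde\setminus Y}$ and every $\alphabar\in K$. Bounds (i) and (ii) follow from compactness of $K$, continuity of $\alpha\mapsto\gtilde_\alpha$ in $C^\infty$, and the hypothesis that $\gtilde_\alpha^{(v)}(0)\neq 0$ on $K$. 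Bound (iii) follows by contradiction: were it to fail, one could extract convergent subsequences $\alpha^{(k)}\to\alpha^*\in K$ and $t^{(k)}\to t^*\in\cl{\Xtilde\setminus Y}$ with $\Re\gtilde_{\alpha^{(k)}}(t^{(k)})\to 0$, and joint continuity would give $\Re\gtilde_{\alpha^*}(t^*)=0$, contradicting Lemma~\ref{gtilde} applied to $\alpha^*$.

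Feeding these bounds into Theorem~\ref{Elst}, whose remainder is controlled by $|a|$ together with fixed $C^k$-norms of the amplitude and phase, yields an estimate for each integral $I_{j,n}$ that is uniform in $\alpha$ when expressed in the natural scale $\alpha_d n$; the exponentially small tail from Lemma~\ref{FL_integral} inherits a uniform rate $\epsilon<1$ from (iii). Combining these estimates with the polynomial prefactors $(\alpha_d n+1)^{\overline{p-1-j}}$ and summing over the finite ranges $0\leq j\leq p-1$ and $0\leq k\leq N-1$ yields the uniform versions of ($\dagger$) and ($\ddagger$). I expect the main obstacle to be item (iii): it hinges on joint continuity of $(\alpha,t)\mapsto\gtilde_\alpha(t)$ on $K\times\cl{\Xtilde\setminus Y}$, which in turn depends on the implicit-function construction of $h$ being uniform across all $\alpha\in K$. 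Once joint continuity is in place, the remainder of the argument is standard compactness.
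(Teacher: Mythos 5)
There is a genuine gap, and it sits exactly where you yourself flag it. Your entire compactness scheme (continuity of $\alpha\mapsto\gtilde_\alpha$ into $C^\infty$, the lower bound on $|a_\alpha|$, and the uniform bound $\Re\gtilde_\alpha\ge\delta$ off a neighbourhood of $0$) rests on two assertions that are not established. First, the claim that one application of the implicit function theorem yields a single holomorphic $h$ on a fixed neighbourhood of the projection of $\cl{V}$ is false in general: the implicit function theorem is local, the variety $\sing$ may have several branches $y=h(w)$ over the same base point (the relevant branch varying with $\alpha$), and the hypothesis $x_d\del_d H(x)\neq 0$ is only assumed on $V$, not on $\cl{V}$. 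The paper instead covers the compact set of critical points by finitely many local implicit-function charts $D'_j\times h_j(D'_j)$ and then uses the tube lemma to extract a single neighbourhood $\Xtilde$ of $0$ that serves every $\alpha$; some such finite-chart argument is unavoidable, and without it you do not even have a common domain on which to compare the $\gtilde_\alpha$. Second, the joint continuity of $(\alpha,t)\mapsto\gtilde_\alpha(t)$ — equivalently the continuity of the map $\alpha\mapsto c(\alpha)$ — is asserted (``the remainder of the argument is standard compactness'') but never proved; boundedness of the ratio $\alpha_1/\alpha_d$ on $K$ does not give it. This continuity is the real content of the paper's proof: it introduces the logarithmic Gauss map $\gamma(x)=[x_1\del_1 H(x), x_2\del_2 H(x)]$, observes that $c$ is critical for $\alpha$ iff $\gamma(c)=\alphabar$, that the uniqueness hypothesis makes $\gamma^{-1}$ well defined on $K$, and concludes that $\gamma^{-1}$ is continuous because a continuous bijection from a compact space onto a Hausdorff space is a homeomorphism. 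Acknowledging an obstacle is not the same as overcoming it, so as written the proposal does not constitute a proof.

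On the positive side, your overall architecture matches the paper's: the paper also reduces the proposition to uniform sup-norm bounds on the derivatives $\del^\beta\gtilde_\alpha$ over a fixed $\Xtilde$, which is precisely the hypothesis under which Theorem~\ref{Elst} guarantees bounded big-oh constants, and then obtains those bounds by compactness. Your extra items (i) and (iii) — a uniform lower bound on $|a_\alpha|$ and a uniform positive lower bound for $\Re\gtilde_\alpha$ on $\cl{\Xtilde\setminus Y}$ (hence a uniform rate for the exponentially small tail) — go beyond what the paper explicitly checks and are a reasonable refinement, but they too collapse without the continuity of $\alpha\mapsto c(\alpha)$ and the common-domain construction described above. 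Supply those two ingredients (Gauss-map homeomorphism, finite charts plus tube lemma) and your argument becomes essentially the paper's proof of Proposition~\ref{uniform} adapted to Theorem~\ref{Elst}.
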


\begin{proof}[Proof of Proposition~\ref{uniform}]
We take up where the proof of Theorem~\ref{asymptotics} left off.
Recall that $\gtilde$ depends on $\alpha$.
Let us emphasize this dependence by writing $\gtilde_\alpha$.
By Theorem~\ref{Hormander} it suffices to show that for any fixed positive 
integer $P$ there exists $M>0$ such that for all $\alpha\in K$ and all 
$\beta\in\nats^{d-1}$ with $|\beta|\le P$ we have 
\begin{equation}\label{supnorm}
||\del^\beta \gtilde_\alpha ||_\infty 
=  
\sup \{ |\del^\beta \gtilde_\alpha(t)| : t\in\Xtilde\} \le M,
\end{equation}
where $\Xtilde\subset\reals^{d-1}$ is a suitable neighborhood of 0.

To prove such a bound we first show that the correspondence between a direction
in $K$ and its critical point is continuous.
To this end it will be helpful to introduce the logarithmic Gauss map 
$\gamma:V\to\cproj^{d-1}$ defined by 
$x\mapsto [x_1 \del_1 H(x),\ldots,x_d \del_d H(x)]$.
Note that $\gamma$ is well-defined since 
$(x_1 \del_1 H(x),\ldots,x_d \del_d H(x))\neq 0$ on $V$ by hypothesis.
Note also that $\alpha\in K$ is critical for $c$ iff 
$\gamma(c) = \alpha$.

Since $H$ is holomorphic on $V$, each $\del_j H$ is holomorphic on $V$, so 
that $\gamma$ is continuous.
By hypothesis, for every $\alpha\in K$ the preimage $\gamma^{-1}(\{\alpha\})$ 
contains exactly one element.
Thus $\gamma^{-1}:K\to V$ is well-defined.
Moreover $\gamma^{-1}$ is continuous since $\gamma$ restricted to 
$\gamma^{-1}(K)$ is a continuous bijection on a compact space into a Hausdorff
space and therefore a homeomorphism.

Each $\alpha\in K$ has associated to it a point 
$\gamma^{-1}(\alpha)\in\gamma^{-1}(K)$ which has has associated to it the 
functions of Definition~\ref{functions}.
We now show that finitely many such functions will do to handle all points of 
$\gamma^{-1}(K)$.
For each $c\in\gamma^{-1}(K)$ let $D$ be the polydisc from 
Lemma~\ref{integral} containing $\chat$ and $h$ the nonzero holomorphic function
on $D$ associated to $c$.
Let $D'$ be a polydisc such that $D'\subset \cl{D'} \subset D$.
The collection of all such $D'\times h(D')$ forms an open cover for the 
compact space $\gamma^{-1}(K)$.
So this cover has a finite subcover 
$D'_1\times h_1(D'_1),\ldots,D'_l\times h_l(D'_l)$.
Let $D_1,\ldots,D_l$ be the superpolydiscs corresponding to $D'_1,\ldots,D'_l$.
To handle the various $E$ of Definition~\ref{functions}, define 
$e:\prod_{j=0}^l \cl{D'_j} \times[-1,1]^{d-1}\to\comps^{d-1}$ by 
$(w,t)\mapsto (w_1\me^{\mi t_1},\ldots,w_{d-1}\me^{\mi t_{d-1}})$.
Then $e$ is continuous, and for all $j$ the open set $e^{-1}(D_j)$ contains
$\cl{D'_j}\times\{0\}$.
Since $\cl{D'_j}$ is compact, by the tube lemma \cite[Lemma 5.8]{Munk1975} there
exists a neighborhood $Y_j$ of 0 in $[-1,1]^{d-1}$ such that
$e^{-1}(D_j) \supseteq \cl{D'_j}\times Y_j$.  
Set $Y = \bigcap_{j\le l} Y_j \subset [-1,1]^{d-1}$, and let $\Xtilde$ be a 
neighborhood of 0 with $\Xtilde\subset\cl{\Xtilde}\subset Y$.
Thus to each $\alpha\in K$ are associated some $c= \gamma^{-1}(\alpha)$, $D'_j$, 
$h_j$, $\htilde_j$, and
$\log \left( \frac{\htilde_j(t)}{\htilde_j(0)} \right) +
\mi\sum_{0\le m<d} \frac{\alpha_m}{\alpha_d} t_m = \gtilde_\alpha$, where the last 
two functions are defined on $Y_j$ and so on $Y$.

With this setup we now show \eqref{supnorm}.
Since $A_j: Y\to\reals$ for $j\le l$ and $B:K\times Y\to\reals$ defined by
$A_j(t)= \Big| \log\left(\tfrac{\htilde_j(t)}{\htilde_j(0)}\right) \Big|$ and
$B(\alpha,t)= \Big| \sum_{0\le m<d} \tfrac{\alpha_m}{\alpha_d} t_m \Big|$ are
continuous and $\cl{\Xtilde}$ and $K$ are compact, we have that 
$M_A:= \sum_{0\le j\le l} \max\{A_j(t) : t\in\cl{\Xtilde}\} < \infty$,
$M_B:= \max\{ B(\alpha,t) : \alpha\in K, t\in\cl{\Xtilde} \} < \infty$, and
\[
||\gtilde_\alpha||_\infty 
\le     
\sup_{\alpha\in K, t\in\Xtilde} A_j(t) +B(\alpha,t) 
\quad\text{(for some $j\le l$)}\quad 
\le  
M_A +M_B,
\]
a bound that is independent of $\alpha$.
Similarly, since each $\htilde_j$ is $C^\infty$ over $Y$, each  
$||\del^\beta g_\alpha ||_\infty$ for $|\beta|\le P$ stays bounded as $\alpha$
varies within $K$, as desired. 
\end{proof}

The proof of Theorem~\ref{uniform_degenerate} is similar but uses
Theorem~\ref{Elst} instead of Theorem~\ref{Hormander}.
\section{Rewriting the expansion in terms of the original data}
\label{sec:original}

To actually compute with the formulas in Theorems~\ref{asymptotics} and 
\ref{asymptotics_degenerate} it is helpful to rewrite the quantities involved
in terms of the original data $G$ and $H$.
The propositions below give formulas for calculating $\gtilde''(0)$ and $u_j$ 
in terms of derivatives of $G$ and $H$.

\begin{proposition}[{\cite[Theorem 3.3]{RaWi2007}}]\label{hessian}
Let $\alpha\in\pnats^d$ and $c\in\sing$.
If $c$ is smooth with $c_d \del_d H(c)\neq 0$ and critical for $\alpha$, 
then for all $l,m < d$ with $l\neq m$ we have
{\small
\begin{align*}
\gtilde''(0)_{lm}
=&
\frac{c_l c_m}{c_d^2 (\del_d H)^2} \\
 &
\times \left( \del_m H \del_l H+ c_d
(\del_d H \del_m \del_l  H- \del_m H \del_d \del_l H
-\del_l H \del_m \del_d  H+ \frac{\del_l H \del_m H}{\del_d H}
\del_d^2  H)
\right) \Bigg|_{x = c} \\
\gtilde''(0)_{ll}
=&
\frac{c_l \del_l H}{c_d \del_d H} + \frac{c_l^2}{c_d^2 (\del_d H)^2}
\left( (\del_l H)^2+ c_d
(\del_d H \del_l^2 H- 2 \del_l H \del_d \del_l H
+ \frac{(\del_l H)^2}{\del_d H} \del_d^2 H)
\right) \Bigg|_{x = c}.
\end{align*}
}
\end{proposition}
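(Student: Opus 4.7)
The plan is a direct chain-rule and quotient-rule computation starting from the first-derivative formula
\[
\del_m \gtilde(t) = -\mi c_m \me^{\mi t_m} \frac{\del_m H(K(t))}{h(E(t))\, \del_d H(K(t))} + \mi\frac{\alpha_m}{\alpha_d}
\]
that was established inside the proof of Lemma~\ref{gtilde} via the implicit function identity
\[
\del_m h(w) = -\frac{\del_m H(w,h(w))}{\del_d H(w,h(w))}.
\]
Here I abbreviate $K(t)=(E(t),h(E(t)))$, so that $K(0)=c$ and $h(E(0))=c_d$.

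First, for a generic smooth $\phi:\comps^d\to\comps$ I would record the chain-rule identity
\[
\del_l\bigl[\phi(K(t))\bigr] = \mi c_l\me^{\mi t_l}\Bigl(\del_l\phi(K(t)) + \del_l h(E(t))\,\del_d\phi(K(t))\Bigr),
\]
which, evaluated at $t=0$ and combined with the implicit-function substitution above, eliminates every appearance of $\del_l h$ in favour of derivatives of $H$ at $c$. Next I would apply the quotient rule to the rational expression $\del_m H/(h\cdot \del_d H)$ appearing in $\del_m\gtilde$; this produces three factor-wise contributions (from $\del_m H$, from $h$, and from $\del_d H$), each of which in turn splits into a $\del_l$ piece and a $\del_l h\,\del_d$ piece inside $H$. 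Substituting $\del_l h(\chat)=-\del_l H(c)/\del_d H(c)$, using $h(\chat)=c_d$, and clearing the common denominator $c_d^2(\del_d H)^2$ assembles the stated off-diagonal formula for $\gtilde''(0)_{lm}$ with $l\neq m$.

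For the diagonal case $l=m$ the same scheme applies, with one new feature: the prefactor $c_m\me^{\mi t_m}$ itself now depends on the differentiation variable. Its derivative contributes an extra $\mi c_l\me^{\mi t_l}$, which after evaluation at $t=0$ and combination with the $-\mi$ out front produces the additional first-order-in-$H$ term $c_l\del_l H/(c_d\del_d H)$ visible in $\gtilde''(0)_{ll}$. The remaining diagonal terms come from the same quotient-rule expansion as before, specialised to $l=m$.

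The argument is entirely mechanical; the main nuisance — and hence the principal obstacle — will be bookkeeping, since roughly eight terms appear after differentiating the three-factor quotient and tracing each chain-rule branch, and one must be careful to replace every $\del_l h$ via the implicit-function identity before attempting to simplify. No conceptual difficulty beyond this arises, and since the identity is precisely \cite[Theorem 3.3]{RaWi2007}, one could also simply defer to that reference in place of reproducing the computation in full.
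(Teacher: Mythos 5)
Your computation is correct. I verified both cases: starting from the first-derivative formula for $\del_m\gtilde$ established in the proof of Lemma~\ref{gtilde}, applying the chain-rule identity you record and the quotient rule to $\del_m H/(h\cdot\del_d H)$, eliminating $\del_l h$ via $\del_l h = -\del_l H/\del_d H$, and evaluating at $t=0$ with $h(\chat)=c_d$ reproduces the stated off-diagonal entry; and in the diagonal case the $t_l$-dependence of the prefactor $c_l\me^{\mi t_l}$ contributes precisely the extra term $c_l\del_l H/(c_d\del_d H)$ while the two mixed-partial terms merge into $-2\del_l H\,\del_d\del_l H$. Note that the paper itself offers no proof of Proposition~\ref{hessian}; it is stated purely as a citation to \cite[Theorem 3.3]{RaWi2007}, so you have in fact supplied more than the paper does, and your closing remark that one may simply defer to that reference is exactly what the paper chose to do.
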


In the presence of symmetry Proposition~\ref{hessian} simplifies greatly.

\begin{proposition}[{\cite[Proposition 3.4]{RaWi2007}}]\label{symmetric}
Let $\alpha\in\pnats^d$ and $c\in\sing$.
If $x \mapsto H(x)$ is symmetric, $\alpha$ has all of its components equal, and 
$c$ lies in the positive orthant, is strictly minimal, smooth with 
$c_d \del_d H(c)\neq 0$, and critical for $\alpha$, then $c$ has all of its 
components equal and for all $l,m < d$ with $l\neq m$,
\[
\gtilde''(0)_{lm} = q,
\quad
\gtilde''(0)_{ll} = 2q, 
\quad
\text{and}
\quad
\det\gtilde''(0) =  d q^{d-1},
\]
where $q = 1 + \frac{c_1}{\del_d H}(\del_d^2 H - \del_1 \del_d H) \Big|_{x=c}$.
\end{proposition}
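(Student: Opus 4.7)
The plan is to derive the proposition by specializing Proposition~\ref{hessian} to the symmetric setting, once we have reduced to the case where $c$ has all components equal.

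First, I would argue that $c$ must have all equal components. Since both $H$ and $\alpha$ are invariant under permutation of coordinates, for any permutation $\sigma$ of $\{1,\ldots,d\}$ the point $\sigma(c)$ lies in $\sing$ and satisfies the critical-point equations for $\alpha$. Combined with the strict minimality of $c$ and the restriction to the positive orthant, a short symmetrization argument (carried out in \cite{RaWi2007}) forces $c_1 = \cdots = c_d =: c_\star$, a single positive real.

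Next I would use the symmetry of $H$ to collapse the partial derivatives at $c$. Because $H$ is symmetric and $c$ is invariant under permutations, there exist constants $A$, $B$, $D$ with $\del_j H(c) = A$ for all $j$, $\del_j^2 H(c) = B$ for all $j$, and $\del_j \del_k H(c) = D$ for all $j \neq k$. Substituting these values into Proposition~\ref{hessian} and cancelling, one obtains, for $l \neq m < d$,
\[
\gtilde''(0)_{lm} = \frac{1}{A^2}\bigl(A^2 + c_\star A(B-D)\bigr) = 1 + \frac{c_\star(B-D)}{A} = q,
\]
and for the diagonal,
\[
\gtilde''(0)_{ll} = 1 + \frac{1}{A^2}\bigl(A^2 + c_\star(2AB - 2AD)\bigr) = 2 + \frac{2c_\star(B-D)}{A} = 2q,
\]
where $q$ matches the stated expression $1 + \frac{c_1}{\del_d H}(\del_d^2 H - \del_1 \del_d H)\big|_{x=c}$.

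Finally, the resulting $(d-1) \times (d-1)$ Hessian takes the form $q(I + J)$, where $J$ is the all-ones matrix. Since $J$ has eigenvalues $d-1$ (simple) and $0$ (with multiplicity $d-2$), the matrix $I + J$ has eigenvalues $d$ and $1$ (with multiplicity $d-2$), giving $\det(I + J) = d$ and hence $\det \gtilde''(0) = d q^{d-1}$.

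The only genuinely nontrivial step is the first one—extracting $c_1 = \cdots = c_d$ from strict minimality and symmetry—since permuting a point of the positive orthant need not land in $\overline{D(c)}$ and so strict minimality has to be invoked with care. Once that reduction is in place, the remaining simplifications are purely algebraic and the determinant is a one-line eigenvalue computation.
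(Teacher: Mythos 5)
The paper itself gives no proof of this proposition; it is imported verbatim from \cite{RaWi2007}, so there is no in-text argument to compare against. That said, your route—specialize Proposition~\ref{hessian} to the symmetric, equal-components case and then read off the determinant—is exactly the natural derivation, and the algebra checks out: with $c_1=\cdots=c_d=c_\star$, $\del_j H(c)=A$, $\del_j^2 H(c)=B$, $\del_j\del_k H(c)=D$ ($j\neq k$), the off-diagonal entry of Proposition~\ref{hessian} collapses to $\tfrac{1}{A^2}\bigl(A^2+c_\star A(B-D)\bigr)=q$ and the diagonal entry to $1+\tfrac{1}{A^2}\bigl(A^2+2c_\star A(B-D)\bigr)=2q$, and the determinant of $q(I_{d-1}+J_{d-1})$ is $dq^{d-1}$ via the eigenvalues $d,1,\ldots,1$ of $I+J$. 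Your reading of $q$ as $1+\tfrac{c_\star(B-D)}{A}$ matches the stated formula.

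The one genuine gap—which you honestly flag—is the deduction that $c$ has all components equal. Symmetry of $H$ and constancy of $\alpha$ do make the critical-point system permutation-invariant, and a short argument shows any permutation $\sigma(c)$ is again strictly minimal (if $x\in\sing$ with $|x_j|\le|\sigma(c)_j|$ for all $j$, then $\sigma^{-1}(x)\in\sing$ lies in $\cl{D(c)}$, so $\sigma^{-1}(x)=c$). But, as you correctly observe, this alone does not pin down $\sigma(c)=c$: there can be several distinct strictly minimal points in $\preals^d$ when the components are unequal (e.g. $H=2-x_1-x_2$ has strictly minimal points $(t,2-t)$ for all $t\in(0,2)$). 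One must therefore use criticality, not just minimality, to force the diagonal; deferring that part to \cite{RaWi2007} is acceptable here since the whole proposition is a citation, but be aware that "strict minimality plus symmetry" by itself is not enough, so the sentence in your sketch invoking "strict minimality and the restriction to the positive orthant" should be replaced by an argument that genuinely engages the critical-point equations.
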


\begin{proposition}
If $c\in\sing$ is smooth with $c_d \del_d H(c)\neq 0$, then for all $j<p$ 
and $w\in W$ we have
\[
u_j(w)
=
(-h(w))^{-p+j} \lim_{y \to h(w)} \frac{\partial^j}{\partial y^j} 
\frac{G(w,y)}{Q(w,y)^p},
\]
where $Q:((W\times\comps)\cap\mathcal{D})\setminus\sing\to\comps$ is given by
$Q(w,y) = \frac{H(w,y)}{y - h(w)}$.
Moreover, 
\[
\lim_{y \to h(w)} \del_d^j Q(w,y) 
= \frac{1}{j+1} \del_d^{j+1} H(w,h(w)). 
\]
In particular,
\[
u_0(w) = \frac{G(w,y)}{\Big(-h(w) \del_d H(w,h(w))\Big)^p}.
\]
\end{proposition}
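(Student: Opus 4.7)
The plan is to exploit the factorization $H(w,y) = (y - h(w)) Q(w,y)$ built directly into the definition of $Q$. Since $c$ is smooth with $\del_d H(c) \neq 0$, the function $y \mapsto H(w,y)$ has $h(w)$ as a simple zero for $w$ near $\chat$, so $Q$ extends holomorphically across $y = h(w)$ and is nonzero there; consequently $G/Q^p$ is holomorphic near $(w, h(w))$ and the ``limits'' in the statement are in fact genuine values at $y = h(w)$. The standing assumption $c_d \neq 0$ ensures we have shrunk $W$ so that $h$ is nowhere zero, which will matter when we handle the prefactor $(-y)^{-p+j}$.

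For the first identity, I would substitute $H(w,y)^p = (y - h(w))^p Q(w,y)^p$ into the defining formula for $u_j$, so that the problematic factor $(y - h(w))^p$ cancels:
\[
u_j(w) = \lim_{y \to h(w)} (-y)^{-p+j} \frac{\del^j}{\del y^j}\bigl((y - h(w))^p F(w,y)\bigr) = \lim_{y \to h(w)} (-y)^{-p+j} \frac{\del^j}{\del y^j} \frac{G(w,y)}{Q(w,y)^p}.
\]
Because $G/Q^p$ is $C^\infty$ in a neighborhood of $y = h(w)$ and $(-y)^{-p+j}$ is continuous at $y = h(w) \neq 0$, the limit of the product splits and $(-h(w))^{-p+j}$ pulls outside, yielding the stated formula.

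For the second identity, I would Taylor expand $H(w, \cdot)$ at $y = h(w)$. Since $H(w, h(w)) = 0$,
\[
H(w,y) = \sum_{k \geq 1} \frac{(y - h(w))^k}{k!} \del_d^k H(w, h(w)),
\]
whence
\[
Q(w,y) = \frac{H(w,y)}{y - h(w)} = \sum_{k \geq 0} \frac{(y - h(w))^k}{(k+1)!} \del_d^{k+1} H(w, h(w)).
\]
Differentiating this series $j$ times in $y$ and then sending $y \to h(w)$ picks out only the $k = j$ term, which evaluates to $\frac{j!}{(j+1)!}\, \del_d^{j+1} H(w, h(w)) = \frac{1}{j+1}\, \del_d^{j+1} H(w, h(w))$, as claimed.

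For the explicit formula for $u_0$, I would set $j = 0$ in both identities: the inner derivative becomes the identity and $\lim_{y \to h(w)} Q(w,y) = \del_d H(w, h(w))$, so
\[
u_0(w) = (-h(w))^{-p} \frac{G(w, h(w))}{\bigl(\del_d H(w, h(w))\bigr)^p} = \frac{G(w, h(w))}{\bigl(-h(w) \del_d H(w, h(w))\bigr)^p},
\]
which matches the displayed formula once the $y$ on its right-hand side is read as $h(w)$. No step here is a real obstacle: the whole proposition is a careful unwinding of the definition of $u_j$ combined with the Taylor expansion of $H$ at its simple zero $h(w)$, with the one mildly delicate point being to verify that $Q$ and hence $G/Q^p$ really do extend smoothly across that zero so that limits may be replaced by evaluations.
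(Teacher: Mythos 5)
Your argument matches the paper's proof essentially step for step: the first identity follows by substituting $H(w,y)^p = (y-h(w))^p\,Q(w,y)^p$ into the definition of $u_j$, the second by Taylor-expanding $H(w,\cdot)$ at its simple zero $h(w)$, dividing by $(y-h(w))$, and reading off the $j$th derivative at that point. You also correctly note (as the paper's display quietly leaves implicit) that the residual $y$ in the stated formula for $u_0$ should be $h(w)$.
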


\begin{proof}
The first statement is just the definition of $u_j$.

Let $(w,y) \in ((W \times \comps) \cap \mathcal{D}) \setminus \sing$.
Then $H(w,h(w)) = 0$ and
\begin{align*}
Q(w,y)
&=
\frac{H(w,y)}{y - h(w)} \\
&=
\frac{H(w,y)}{y - h(w)}
\sum_{n=1}^\infty \frac{\del_d^n H(w,h(w))}{n!} (y-h(w))^n \\
&=
 \sum_{n=0}^\infty \frac{\del_d^{n+1} H(w,h(w))}{(n+1)!} (y-h(w))^n.
\end{align*}
Therefore $Q$ extends to a holomorphic function on 
$(W\times\comps)\cap\mathcal{D}$ and 
\begin{align*}
\lim_{y\to h(w)} \del_d^j Q(w,y)
=& 
\lim_{y\to h(w)} \sum_{n=j}^\infty n(n-1)\cdots(n-j+1)
\frac{\del_d^{n+1} H(w,h(w))}{(n+1)!} (y-h(w))^{n-j} \\
=& 
\frac{1}{j+1} \del_d^{j+1} H(w,h(w)) .
\end{align*}
\end{proof}
\section{Examples}\label{sec:examples}

Let us apply the results of Section~\ref{sec:asymptotics} and 
\ref{sec:original} to a few representative combinatorial examples.
We used Maple 11 to do the calculations, and our worksheets are available at
\url{http://www.cs.auckland.ac.nz/~raichev/research.html}.

First we mention two shortcuts to finding strictly minimal points.

\begin{proposition}[{\cite[Theorem 3.16]{PeWi2008}}]\label{minimal}
If the coefficients of $F$ are all nonnegative and there is a critical point for
$\alpha$, then there is a minimal critical point for $\alpha$ in $\preals^d$.
\end{proposition}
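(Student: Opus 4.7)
The strategy is to pass to coordinatewise-logarithmic coordinates, where the nonnegativity of coefficients makes the domain of convergence convex and the critical-point equations reduce to a normal-direction condition. First, I would use the inequality $|F(x)| \le F(|x_1|, \ldots, |x_d|)$, which is valid by nonnegativity of the Maclaurin coefficients, to conclude that the domain of convergence $\mathcal{B}$ of the series is a complete Reinhardt domain determined by its positive-real slice. Consequently, the log-image $\Omega := \{(\log r_1, \ldots, \log r_d) : (r_1, \ldots, r_d) \in \mathcal{B} \cap \preals^d\}$ is an open convex subset of $\reals^d$, and every point of $\partial(\mathcal{B} \cap \preals^d)$ lies in $\sing$ and is minimal in $\sing$.

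Next, I would translate the critical-point condition into convex-geometric language. At a smooth point $x \in \sing \cap \preals^d$, the vector $(x_1 \del_1 H(x), \ldots, x_d \del_d H(x))$ is the log-gradient of $H$ and hence an outer normal to $\Omega$ at $\log x$. The critical-point equations say exactly that this log-gradient is proportional to $\alpha$, so finding a minimal critical point in $\preals^d$ for $\alpha$ amounts to finding a point of $\partial \Omega$ at which $\alpha$ is an outer normal. To produce such a point, I would use the hypothesized critical point $c$: because $c \in \sing$, the polyradius $(|c_1|, \ldots, |c_d|)$ cannot lie in the interior of $\mathcal{B}$ (if it did, $\mathcal{B}$ would contain an open polydisc about that polyradius, hence $c$ itself), so $\log|c| \in \partial \Omega$. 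This shows $\Omega$ is bounded above in the direction $\alpha$, and the supporting-hyperplane theorem then yields a contact point $\log x^* \in \partial \Omega$ at which $\alpha$ is an outer normal; the corresponding $x^* \in \preals^d \cap \sing$ is the required minimal critical point.

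The main obstacle is ensuring that the contact point $\log x^*$ is a smooth point of $\sing$, so that ``$\alpha$ is an outer normal at $\log x^*$'' gives the exact proportionality demanded by the critical-point equations rather than mere membership in a larger normal cone. I would handle this by combining the smoothness of $H$ in a neighborhood of $c$ (from the implicit function theorem applied at $c$) with the observation that the supporting hyperplane with normal $\alpha$ meets $\partial \Omega$ in a face whose relative interior consists of smooth boundary points of $\Omega$; selecting $x^*$ from this relative interior would complete the argument.
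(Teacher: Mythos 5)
The paper does not actually prove this proposition; it cites it from \cite{PeWi2008} without reproducing an argument, so there is no ``paper's own proof'' to compare against. I will therefore assess your argument on its own terms.

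Your overall strategy --- move to logarithmic coordinates, exploit the convexity of the log-domain of convergence (which follows from nonnegativity of the coefficients), reinterpret criticality as a normal-direction condition, and then invoke a supporting hyperplane --- is the standard and correct framework, and you correctly identified the key obstruction yourself. But the way you propose to close that gap does not work, and there is a second, quieter gap you pass over.

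The \emph{smoothness} gap is genuine and your sketched fix is incorrect. By the paper's Definition~2.3 a critical point is by definition a smooth point of $\sing$, so the conclusion you need is that the contact point $x^*$ is smooth. You propose to get this from ``the face of $\partial\Omega$ with outer normal $\alpha$ has relative interior consisting of smooth boundary points of $\Omega$.'' This is false for general convex sets: if that face has dimension $k<d-1$ then every point of it, including relative-interior points, has a normal cone of dimension $\ge 2$ and hence is \emph{not} a smooth boundary point (think of a vertex or an edge of a cube). Conversely, even when $\log x^*$ is a $C^1$ boundary point of the convex body $\Omega$, this does not imply that $x^*$ is a smooth point of the analytic variety $\sing$ --- $\sing$ can have, say, a cuspidal or a self-intersection singularity at a point whose log-modulus image nevertheless sits on a perfectly smooth piece of a convex boundary. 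Smoothness of $H$ near the \emph{given} critical point $c$ tells you nothing about $H$ near $x^*$, which is in general a different point. So the one step you flagged as the obstacle remains open in your write-up.

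There is also a subtlety you gloss over in asserting that ``every point of $\partial(\mathcal{B}\cap\preals^d)$ lies in $\sing$.'' A boundary point of the domain of convergence is a singularity of the power series (by Pringsheim in $\preals^d$), but to say it lies in $\sing=\{x\in\domain : H(x)=0\}$ you must also know it lies in $\domain$ and that the singularity is accounted for by a zero of $H$ rather than by, say, the boundary of $\domain$. Without some hypothesis ensuring $\domain$ is large enough (or that $\sing$ is closed relative to the ambient space near the contact point), the point $x^*$ you produce need not be in $\sing$ at all, which undermines the rest of the argument. In short, your convex-geometric reduction is the right skeleton, but as written the proof does not establish that the contact point is a smooth point of $\sing$, and that is precisely the content the proposition is asserting.
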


A $d$-variate power series $\sum a_\alpha x^\alpha$ is called \hl{aperiodic} if
the $\ints$-span of $\{\alpha\in\nats^d : a_\alpha \neq 0\}$ equals $\ints^d$.

\begin{proposition}[{\cite[Proposition 3.17]{PeWi2008}}]\label{strictly_minimal}
If $1 - H$ is aperiodic and has nonnegative coefficients, then every minimal 
point of $\sing$ is strictly minimal and lies in $\preals^d$.
\end{proposition}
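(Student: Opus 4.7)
The plan is to encode $c \in \sing$ as the identity $\sum_\alpha a_\alpha c^\alpha = 1$, where $1 - H(x) = \sum_\alpha a_\alpha x^\alpha$ with $a_\alpha \ge 0$ and, under the standing normalization $H(0) = 1$, $a_0 = 0$. Rigidity will then be extracted from triangle inequalities combined with the aperiodicity of the support $S := \supp(1-H)$.

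First I would prove $|c| \in \sing$ via the triangle bound
\[
1 = \left|\sum_\alpha a_\alpha c^\alpha\right| \le \sum_\alpha a_\alpha |c|^\alpha = 1 - H(|c|),
\]
which gives $H(|c|) \le 0$. Combined with $H(0) = 1$ and the intermediate value theorem along the real segment $[0, |c|]$, this produces some $s \in (0,1]$ with $s|c| \in \sing$; minimality of $c$ (in the generic case where $c_j \neq 0$ for all $j$) forces $s=1$, so $|c| \in \sing$ and the triangle inequality is an equality. That equality forces $c^\alpha > 0$ for every $\alpha \in S$. Writing $c_j = |c_j|\me^{\mi \theta_j}$, this reads $\sum_j \alpha_j \theta_j \equiv 0 \pmod{2\pi}$ for every $\alpha \in S$; by aperiodicity the $\ints$-span of $S$ is all of $\ints^d$, so the congruence extends to every $\alpha \in \ints^d$, and specializing to $\alpha = e_j$ gives $\theta_j \equiv 0 \pmod{2\pi}$. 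Hence $c \in \preals^d$.

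For strict minimality I would suppose $x \in \sing$ with $|x_j| \le |c_j|$ for every $j$ and chain
\[
1 = \left|\sum_\alpha a_\alpha x^\alpha\right| \le \sum_\alpha a_\alpha |x|^\alpha \le \sum_\alpha a_\alpha |c|^\alpha = 1.
\]
Equality in the second inequality, being termwise, forces $|x|^\alpha = |c|^\alpha$ for every $\alpha \in S$; taking logarithms (after confirming $|x_j| > 0$ by another aperiodicity argument) and invoking aperiodicity again gives $|x_j| = |c_j|$ for all $j$. Equality in the first inequality, treated exactly as in the preceding paragraph, pins down $\arg x_j \equiv 0 \pmod{2\pi}$, and hence $x_j = c_j$ for every $j$, so $x = c$.

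The main obstacle is the disciplined use of aperiodicity, which must be invoked three times to promote $S$-indexed congruences or identities into coordinatewise ones via integer linear combinations (once for $\arg c_j$, once for $|x_j|$, and once for $\arg x_j$). The most delicate sub-step is ruling out $|x_j| = 0$ before taking logarithms: if some $|x_j|$ vanished then $|x|^\alpha = |c|^\alpha$ for $\alpha \in S$ would force $a_\alpha = 0$ whenever $\alpha_j > 0$, confining $S$ to the hyperplane $\{\alpha_j = 0\}$ and contradicting aperiodicity.
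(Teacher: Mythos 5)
The paper does not prove this statement; it is imported verbatim from \cite[Proposition 3.17]{PeWi2008}, so there is no in-paper argument to compare yours against. Evaluated on its own merits, your triangle-inequality-plus-aperiodicity strategy is the standard and essentially correct route: the identity $\sum_\alpha a_\alpha c^\alpha = 1$, the bound $1 \le \sum_\alpha a_\alpha |c|^\alpha = 1 - H(|c|)$, the IVT step closed by minimality to force $H(|c|)=0$, equality in the triangle inequality forcing $\arg c^\alpha = 0$ on $S = \supp(1-H)$, and aperiodicity promoting this to $\arg c_j = 0$ coordinatewise are all sound, as is the parallel chain for strict minimality.

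The one place you should not have waved your hands is the parenthetical ``in the generic case where $c_j \neq 0$ for all $j$.'' This is not merely a technicality you can defer: the paper's definition of minimal is \emph{vacuous} when some $c_j = 0$ (there is no $x$ with $|x_j| < 0$), so minimality supplies no contradiction in your IVT step and the whole first paragraph stalls. Concretely, take $H = 1 - x_1 - x_2$, which satisfies both hypotheses (support $\{(1,0),(0,1)\}$ spans $\ints^2$, coefficients nonnegative). The point $c = (1,0)$ is in $\sing$, is (vacuously) minimal, and is even strictly minimal in the paper's sense, yet $c \notin \preals^2$. So either the cited proposition carries a side hypothesis you need to surface (for instance, that $c$ is a minimal point on the distinguished boundary of the domain of convergence, or that $c$ is critical for some $\alpha \in \pnats^d$, either of which rules out zero coordinates), or you need a separate argument that a minimal point with a zero coordinate cannot occur under the hypotheses actually intended. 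Note also that your ruling-out of $|x_j|=0$ in the strict-minimality paragraph relies on $|c|^\alpha > 0$ for $\alpha \in S$, which presupposes exactly the $c \in \preals^d$ conclusion whose zero-coordinate case is the unresolved one, so that step inherits the same gap.
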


\begin{example}[$d=2$, $p=1$, $N=2$]
Consider the bivariate generating function
\[
  F(x_1,x_2) = \frac{1}{1- x_1- x_2- x_1x_2}
\]
whose coefficients $F_{\beta_1,\beta_2}$ are called Delannoy numbers and count 
the number of lattice paths from $(0,0)$ to $(\beta_1,\beta_2)$ with allowable 
steps $(1,0)$, $(0,1)$ and $(1,1)$.
We compute the first two terms of the asymptotic expansion of $F_{n\alpha}$
as $n\to\infty$ for $\alpha=(3,2)$.

The critical points of $\sing$ are
\[
(-\tfrac{2}{3}+\tfrac{1}{3} \sqrt{13}, -\tfrac{3}{2}+\tfrac{1}{2} \sqrt{13})
\quad\text{and}\quad
(-\tfrac{2}{3}-\tfrac{1}{3} \sqrt{13}, -\tfrac{3}{2}-\tfrac{1}{2} \sqrt{13}).
\]
Both points are smooth and the first point, which we denote by $c$,
is strictly minimal by Propositions~\ref{strictly_minimal} and \ref{minimal}.

Applying the results of Sections~\ref{sec:asymptotics} and \ref{sec:original}, 
we get
\[
  F_{n \alpha}
  =
 \left( c_1^{-3} c_2^{-2} \right)^n 
 \left( b_0 n^{-1/2} +b_1 n^{-3/2} +O\left(n^{-5/2}\right)\right)
\]
as $n\to\infty$, where $c_1^{-3} c_2^{-2} \approx 71.16220050$, 
$b_0 = \frac{13^{3/4} \sqrt{3}}{156 \sqrt{\pi} } (5+\sqrt{13})
  \approx 0.3690602772 $ and
$b_1 = -\frac{5\cdot 13^{3/4} \sqrt{3}}{1898208 \sqrt{\pi}} (79\sqrt{13}+767)
  \approx -0.01853610557 $.

Comparing this approximation with the actual values of
$F_{n \alpha}$ for small $n$ (using 10-digit floating-point arithmetic),
we get the following table.
\vspace{1em}

\begin{center}
{\tiny
\begin{tabular}{|l|lllll|}
\hline
$n$ & 1 & 2 & 4 & 8 & 16 \\
\hline
$F_{n \alpha}$
    & 25            & 1289          & 4.673345$\cdot 10^6$
    & 8.527550909$\cdot 10^{13}$    & 3.978000114$\cdot 10^{28}$ \\
$c^{-n \alpha}(b_0 n^{-1/2})$
    & 26.26314145   & 1321.542224   & 4.732218447$\cdot 10^6$ 
    & 8.581184952$\cdot 10^{13}$    & 3.990499094$\cdot 10^{28}$ \\
$c^{-n \alpha}(b_0 n^{-1/2} +b_1 n^{-3/2})$
    & 24.94407138   & 1288.354900  & 4.672799360$\cdot 10^6$ 
    & 8.527311037$\cdot 10^{13}$   & 3.977972633$\cdot 10^{28}$ \\
one-term relative error
    & -0.05052565800  & -0.02524610085 & -0.01259771042
    & -0.006289501355 & -0.003142026054 \\
two-term relative error
    & 0.002237144800  & 0.0005004654771  & 0.0001167557713
    & 0.00002812906104 & 0.000006908245151 \\
\hline
\end{tabular}
}
\end{center}
\vspace{1em}

For an arbitrary $\alpha$, the two-term asymptotic expansion of
$F_{n \alpha}$ is just as easy to compute symbolically in $\alpha$.
The corresponding constants $c_1$, $c_2$, $a_1$, $a_2$ are
square roots of rational functions of $\alpha_1$, $\alpha_2$, and
$\sqrt{\alpha_1^2 + \alpha_2^2}$.
The exact formulas are somewhat long, so we omit them.
\end{example}

\begin{example}[$d=2$, $p=1$, $N=5$]
Fix $q\in(0,1)$ and consider the bivariate generating function
\[
F(x_1,x_2) = \frac{1 -q x_1}{1 -q x_1 +q x_1 x_2 -x_1^2 x_2}
\]
which arises in the context of quantum random walks.
Motivated by an example from \cite{BrPe2007}, we compute the asymptotics of
$F_{n\alpha}$ as $n\to\infty$ for $\alpha=(2,1-q)$.

There is one critical point of $\sing$ for $\alpha$, namely, $c:= (1,1)$.
This point is smooth.
Explicitly solving for $x_2$ as a function of $x_1$ in $H(x_1,x_2)=0$ 
and applying the minimum modulus theorem shows that $c$ is minimal.
However, $c$ is not finitely minimal, because for every $p_1\in C(1)$ there 
exists $p_2\in C(1)$ such that $(p_1,p_2)\in\sing$.
By a modification of Lemma~\ref{FL_integral} described in 
\cite[Theorem 3.2]{BrPe2007} we may still apply Theorem~\ref{asymptotics_degenerate}
to $c$.
Doing so, we get
\[
F_{n \alpha} = b_0 \, n^{-1/3} +b_4 \, n^{-5/3} + O\left( n^{-2}\right)
\]
as $n\to\infty$, where 
$b_0=\frac{ (1-q)^{2/3} }{ 3^{2/3} q^{1/3}(1+q)^{1/3} \Gamma(2/3) }$
and $b_4=  -\frac{3^{1/6} (q^4+22q^2+1)\Gamma(2/3)}
{280 q^{5/3}(q+1)^{5/3} (1-q)^{2/3}\pi }$.
The $n^{-2/3}$, $n^{-1}$, and $n^{-4/3}$ terms are zero.

Comparing this approximation with the actual values of
$F_{n \alpha}$ for small $n$ (using 10-digit floating-point arithmetic),
we get the following table for $q=1/2$.
\vspace{1em}

\begin{center}
{\tiny
\begin{tabular}{|l|lllll|}
\hline
$n$ & 2 & 4 & 8 & 16 & 32 \\
\hline
$F_{n \alpha}$
    & 0.1875000000  & 0.1523437500  & 0.1221771240
    & 0.09739671811 & 0.07744253816 \\
$b_0 n^{-1/3}$
    & 0.1953794677  & 0.1550727862  & 0.1230813520 
    & 0.09768973380 & 0.07753639314 \\
$b_0 n^{-1/3} + b_4 n^{-5/3}$
    & 0.1855814246  & 0.1519865960  &  0.1221092630
    & 0.09738354495 & 0.07743994970 \\
one-term relative error
    &-0.04202382773 &-0.01791367352 &-0.007400959937
    &-0.003008476011 &-0.001211930578  \\
two-term relative error
    & 0.01023240213 & 0.002344395487  & 0.0005554313097
    & 0.0001352526066 & 0.00003342426606  \\
\hline
\end{tabular}
}
\end{center}
\vspace{1em}
\end{example}

\begin{example}[$d=3$, $p\le 3$, $N=2$]
Consider the $(d+1)$-variate generating function
\[
  W(x_1,\ldots,x_d,y) = \frac{A(x)}{1 - y \, B(x)},
\]
where $A(x) = 1/ \big( 1 - \sum_{j=1}^d \frac{x_j}{x_j+1} \big)$,
$B(x) = 1- (1- e_1(x)) A(x)$, and $e_1(x) = \sum_{i=j}^d x_j$.
Using the symbolic method (as presented in \cite[Chapter 1]{FlSe2009}, say)
it is not difficult to show that $W$ counts all words over a $d$-ary 
alphabet $\Lambda$, where $x_j$ marks occurrences of letter $j$ of $\Lambda$ and
$y$ marks occurrences of \emph{snaps}, non-overlapping pairs of duplicate 
letters counted from left to right.
Here $A(x)$ counts snapless words over $\Lambda$, the so-called Smirnov words.
For more details see \cite{RaWic}.

The coefficient $W_{(n,\ldots,n,s)}$ is then the number of words with $n$ 
occurrences of each letter and $s$ snaps.
For $n\in\pnats$ let $\psi_n$ be the random variable taking a word over $\Lambda$
with $n$ occurrences of each letter and returning the number of snaps in the 
word.
We compute the expectation and variance of $\psi_n$ as $n\to\infty$.
If $\alpha = (1,\ldots,1)$, then
\begin{align*}
\mathbb{E}(\psi_n)
&=
\frac{ \Big( \frac{\del W}{\del y}(x,1) \Big)_{n\alpha}}
{\Big( W(x,1) \Big)_{n\alpha}} 
=
\frac{ \Big( A(x)^{-1} B(x)(1 - e_1(x))^{-2} \Big)_{n\alpha}}
{\Big( (1 - e_1(x))^{-1} \Big)_{n\alpha}} \\
\mathbb{E}(\psi_n^2)
&=
\frac{ \Big( 
\frac{\del^2 W}{\del y^2}(x,1) + \frac{\del W}{\del y}(x,1)
\Big)_{n\alpha} }
{ \Big( W(x,1) \Big)_{n\alpha}} \\
&=
\frac{ \Big( A(x)^{-2}B(x)(B(x)+1)(1-e_1(x))^{-3} \Big)_{n\alpha}}
{ \Big( (1-e_1(x))^{-1} \Big)_{n\alpha}} \\
\mathbb{V}(\psi_n)
&=
\mathbb{E}(\psi_n^2) -\mathbb{E}(\psi_n)^2.
\end{align*}


Let $H(x) = 1-e_1(x)$.
Then the only critical point of $\sing$ is $c: = (1/d,\ldots,1/d)$,
and it is strictly minimal by Propositions~\ref{minimal} and
\ref{strictly_minimal}.
Applying the results of Sections~\ref{sec:asymptotics} and \ref{sec:original} to
$F_1(x):= W(x,1)$ (with $p=1$), $F_2(x):= \del W/\del y (x,1)$ (with $p=2$), and
$F_3(x):=  \del^2 W/\del y^2 (x,1) + \del W/\del y (x,1)$ (with $p=3$) 
with $d=3$, we obtain
\begin{align*}
\mathbb{E}(\psi_n)
&= 
\frac{ [(2\pi n)^2 d]^{-1/2} 
\sum_{j=0}^1 \sum_{k=0}^1
\frac{(n+1)^{\overline{1-j}}}{(1-j)!j!} n^{-k} L_k(\utilde_{2,j}, \gtilde)
+ O\left(n^{-2}\right) }
{ [(2\pi n)^2 d)]^{-1/2} 
\sum_{j=0}^0 \sum_{k=0}^1
\frac{(n+1)^{\overline{-j}}}{(-j)j!} n^{-k} L_k(\utilde_{2,j}, \gtilde) 
+ O\left(n^{-3}\right) } \\
&=
\frac{ \frac{3\sqrt{3}}{8\pi} -\frac{61\sqrt{3}}{192\pi}n^{-1} +O\left(n^{-2}\right) }
{ \frac{\sqrt{3}}{2\pi}n^{-1} -\frac{\sqrt{3}}{9\pi}n^{-2} +O\left(n^{-3}\right) } \\
&= 
\tfrac{3}{4} n - \tfrac{15}{32}  +O\left(n^{-1}\right) ,  \\
\mathbb{E}(\psi_n^2)
&=
\frac{ [(2\pi n)^2 d]^{-1/2} 
\sum_{j=0}^2 \sum_{k=0}^1
\frac{(n+1)^{\overline{2-j}}}{(2-j)!j!} n^{-k} L_k(\utilde_{3,j},\gtilde)
+ O\left(n^{-1}\right) }
{ [(2\pi n)^2 d]^{-1/2} 
\sum_{j=0}^0 \sum_{k=0}^1
\frac{(n+1)^{\overline{-j}}}{(-j)j!} n^{-k} L_k(\utilde_{1,j}, \gtilde) 
+ O\left(n^{-3}\right) } \\
&=
\frac{ \frac{9\sqrt{3}}{32\pi}n -\frac{35\sqrt{3}}{128\pi} +O\left(n^{-1}\right) }
{ \frac{\sqrt{3}}{2\pi}n^{-1} -\frac{\sqrt{3}}{9\pi}n^{-2} +O\left(n^{-3}\right) }\\
&=
\tfrac{9}{16} n^2 -\tfrac{27}{64} n +O(1), \quad\text{and} \\
\mathbb{V}(\psi_n)
&=
\tfrac{9}{32} n +O(1).
\end{align*}

Comparing these approximations with the actual values for small $n$ 
(using 10-digit floating-point arithmetic), we get the following table.
\vspace{1em}

\begin{center}
{\tiny
\begin{tabular}{|l|lll|}
\hline
$n$  & 2 & 4 & 8  \\
\hline
$\mathbb{E}(\psi_n)$
     & 1.000000000 & 2.509090909 & 5.520560294   \\
$(3/4)n$
     & 1.500000000 & 3 & 6 \\
$(3/4)n -15/32 $
     & 1.031250000 & 2.531250000 & 5.531250000   \\
one-term relative error
     & -0.5000000000 & -0.1956521740 & -0.08684620409  \\
two-term relative error
     & -0.03125000000 & -0.008831521776 & -0.001936344398   \\
\hline
$\mathbb{E}(\psi_n^2)$
     & 1.800000000  & 7.496103896 & 32.79620569   \\
$(9/16)n^2$
     & 2.250000000 & 9 & 36 \\
$(9/16)n^2 -(27/64)n$
     & 1.406250000 & 7.312500000 & 32.62500000   \\
one-term relative error
     & -0.2500000000 & -0.2006237006 & -0.09768795635 \\
two-term relative error
     & 0.2187500000  & 0.02449324323 & 0.005220289555   \\
\hline
$\mathbb{V}(\psi_n)$
     & 0.800000000 & 1.200566706 & 2.31961973   \\
$(9/32)n$
     & 0.5625000000 & 1.125000000 & 2.250000000   \\
relative error
     & 0.2968750000 & 0.06294253008 & 0.03001342380   \\
\hline
\end{tabular}
}
\end{center}
\vspace{1em}
\end{example}
\section{Asymptotics of Fourier-Laplace integrals}\label{sec:analysis}  

We conclude with the main analytic results behind the proofs of 
Theorems~\ref{asymptotics} and \ref{asymptotics_degenerate}.
Pemantle and Wilson's approach in \cite{PeWi2002} and our approach here to 
deriving asymptotics for the Maclaurin coefficients of $F$ requires an 
asymptotic expansion for integrals of the form
\[ 
\int_X u(t) \, e^{-\omega g(t)} dt
\]
as $\omega\to\infty$,
where $X\subseteq\reals^{d-1}$ is open and
$u$ and $g$ are complex valued functions on $X$.
Pemantle and Wilson used Watson's lemma and Morse's lemma to 
prove the existence of a full asymptotic expansion for these Fourier-Laplace 
integrals as they are commonly called but gave an explicit formula for the 
leading term only.
In contrast we use the two theorems below which give explicit formulas for 
all terms. 

All function spaces mentioned are complex valued.

The first theorem deals with stationary and nondegenerate points, that is, 
points $t_0$ such that $g'(t_0)=0$ and $\det (g''(t_0)) \neq 0$, respectively.

\begin{theorem}[{\cite[Theorem 7.7.5]{Horm1983}}]\label{Hormander}
Let $X\subset \reals^{d-1}$ be open, $u \in C_c^\infty(X)$, and 
$g\in C^\infty(X)$.
If $\Re g\ge 0$, $\Re g(t_0)=0$, $g$ has a unique stationary point 
$t_0 \in\supp u$, and $t_0$ in nondegenerate, 
then for every $N\in\pnats$ there exist $M>0$ such that 
\[
\int_X u(t) \me^{-\omega g(t)} dt
=
\me^{-\omega g(t_0)} 
\left( \det\left(\frac{\omega g''(t_0)}{2\pi} \right) \right)^{-1/2}
\sum_{k=0}^{N-1} 
\omega^{-k} L_k(u,g)
+O\left( \omega^{-(d-1)/2-N} \right)
\]
for $\omega>0$.
Here $L_k$ is the function defined in Theorem~\ref{asymptotics}
(but without the stipulation $t_0=0$).
Moreover, the big-oh constant is bounded when the partial derivatives of $g$ 
up to order $3(N +\lceil (d-1)/2 \rceil ) +1$ all stay bounded in supremum 
norm over $X$.
\end{theorem}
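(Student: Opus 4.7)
The plan is to adapt the standard proof of the multidimensional method of stationary phase / Laplace method. First I would translate so that $t_0 = 0$ and subtract $g(0)$ so as to reduce to the case $g(0) = 0$; since $e^{-\omega g(t_0)}$ is pulled outside the integral, this only affects bookkeeping. Next, I would use a cutoff function to restrict to a small ball $B_\delta$ around $0$, noting that on the complement $\mathrm{supp}\, u \setminus B_\delta$ we have $\Re g(t) \geq c > 0$ for some constant $c$, so the contribution from outside $B_\delta$ is $O(e^{-c\omega})$ and hence absorbed into the error.

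On the small ball I would then split the phase as $g(t) = \tfrac{1}{2} \langle g''(0) t, t\rangle + \gtildeunder(t)$, where by Taylor's theorem $\gtildeunder$ vanishes to order $3$ at $0$. The strategy is to keep the quadratic part in the exponent and expand the rest: write
\[
e^{-\omega g(t)} = e^{-\tfrac{\omega}{2}\langle g''(0)t,t\rangle} \sum_{l=0}^{L-1} \frac{(-\omega \gtildeunder(t))^l}{l!} + R_L(t,\omega),
\]
where $R_L$ is the Taylor remainder. Then each integral
\[
\int u(t)\, \gtildeunder(t)^l\, e^{-\tfrac{\omega}{2}\langle g''(0) t,t\rangle}\, dt
\]
is a Gaussian moment that I would evaluate via Plancherel: the Fourier transform of the complex Gaussian is again a Gaussian (this is where $(\det(\omega g''(0)/(2\pi)))^{-1/2}$ and the operator $\mathcal{H} = -\sum (g''(0)^{-1})_{rs} \partial_r \partial_s$ appear, since $\mathcal{H}$ is precisely the Laplacian associated to the inverse quadratic form, and Gaussian integrals of monomials $t^\beta$ can be written as $\mathcal{H}^{|\beta|/2}$ applied to $1$ at $0$, up to constants). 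The combinatorial rearrangement of the resulting double sum into $\sum_k \omega^{-k} L_k(u,g)$ comes from counting: each factor of $\gtildeunder$ contributes at least $3$ to the total order of derivatives but only $1$ to the power of $\omega^{-1}$ (after balancing with the $\omega^l$ from the expansion of $e^{-\omega \gtildeunder}$ and the $\omega^{-(l+k)}$ from $\mathcal{H}^{l+k}$), giving the stated formula.

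For the error term I would bound $R_L$ and the tail of the Gaussian-moment expansion by Taylor's theorem with integral remainder, using that $|\gtildeunder(t)| \lesssim |t|^3$ and that $|t|^m e^{-\tfrac{\omega}{2} \Re \langle g''(0) t,t\rangle}$ integrates to $O(\omega^{-(d-1+m)/2})$. Choosing $L \asymp N$ then yields the $O(\omega^{-(d-1)/2 - N})$ remainder. The final uniformity claim (big-oh constant bounded in terms of sup-norms of derivatives of $g$ up to order $3(N + \lceil (d-1)/2\rceil)+1$) follows by tracking exactly how many derivatives of $g$ appear in the error terms: the $l$th expansion term uses up to $3l$ derivatives of $g$, and the error needs $l$ up to $N + \lceil (d-1)/2\rceil$ plus one more for the remainder form.

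The hard part will not be any single step but rather the careful bookkeeping: verifying that the combinatorial formula for $L_k(u,g)$ matches what falls out of the Gaussian-moment computation, and checking that the derivative-counting gives exactly the claimed uniformity order. The Gaussian-moment evaluation via $\mathcal{H}$ is standard but requires care when $g''(0)$ is complex (not positive definite), so I would rely on the hypothesis $\Re g \geq 0$ together with a contour-deformation argument to justify applying Plancherel to the complex Gaussian $e^{-\tfrac{\omega}{2}\langle g''(0)t,t\rangle}$.
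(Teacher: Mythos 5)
This theorem is cited in the paper as Theorem 7.7.5 of H\"ormander's book and is not proved there, so there is no ``paper's own proof'' to compare against; your sketch does follow the same broad strategy as H\"ormander's: translate $t_0$ to the origin, split the phase into its quadratic Taylor part $g_0$ plus a third-order remainder $\gtildeunder$, evaluate Gaussian moments via the Fourier transform (H\"ormander's Lemma 7.7.3, which also performs the analytic continuation to a complex symmetric $g''(t_0)$ with $\Re\, g''(t_0)\ge 0$), and recombine the resulting double sum so that the $l$-th factor of $\gtildeunder$ paired with $\mathcal H^{l+k}$ produces a net $\omega^{-k}$; the bound $l\le 2k$ comes exactly from your observation that $\gtildeunder^l$ needs $3l$ derivatives to survive evaluation at $t_0$ out of the $2(l+k)$ available.

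The genuine gap is the error estimate. You propose to Taylor-expand $\me^{-\omega\gtildeunder(t)}$ and bound the remainder $R_L$ directly ``by Taylor's theorem with integral remainder.'' But $\Re\gtildeunder$ is not sign-definite: the hypothesis gives $\Re g\ge 0$, and nondegeneracy plus the minimum at $t_0$ force $\Re g_0 = \tfrac12\langle (\Re g''(t_0))t,t\rangle \ge 0$ as well, so the difference $\gtildeunder=g-g(t_0)-g_0$ can be negative. Consequently $|\me^{-\omega\gtildeunder(t)}|$ can grow exponentially in $\omega$ on a fixed ball around $t_0$, and a naive pointwise Taylor-remainder bound does not give the required $O\bigl(\omega^{-(d-1)/2-N}\bigr)$. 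H\"ormander closes this by first proving a separate a priori estimate for oscillatory integrals with $\Re g\ge 0$ that controls $\int u\,\me^{-\omega g}$ in terms of sup-norms of finitely many derivatives of $u$ and $g$, and then applies it to $u\,\me^{-\omega\gtildeunder}$ minus its Taylor polynomial; the derivative order $3(N+\lceil(d-1)/2\rceil)+1$ in the uniformity clause is precisely an artifact of that step, not of simple Taylor counting. An alternative fix is to shrink the cutoff to $|t-t_0|\lesssim\omega^{-1/2+\epsilon}$, on which $\omega\gtildeunder(t)=O(\omega^{-1/2+3\epsilon})$ stays uniformly small, and to bound the outer shell by a non-stationary-phase argument; either way the remainder estimate requires a substantive extra ingredient beyond the Taylor-remainder bookkeeping in your sketch.
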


The second theorem deals with degenerate stationary points for $d=2$ and
is proved by adapting H\"{o}rmander's approach.

\begin{theorem}[{\cite[Theorem 1]{Elst2008}}]\label{Elst}
Let $X\subset \reals$ be open, $u \in C_c^\infty(X)$, and 
$g \in C^\infty(X)$.
If $\Re g\ge 0$, $\Re g(t_0)=0$, $g$ has a unique stationary point
$t_0 \in\supp u$, and $v\ge 2$ is least such that $\gtilde^{(v)}(t_0)\neq 0$,
then for every $N\in\pnats$ there exists $M>0$ such that
\[
\int_X u(t) \me^{-\omega g(t)} dt
=
\me^{-\omega g(t_0)} \, \frac{2 (a \omega)^{-1/v}}{v}
\sum_{k=0}^{N-1}
\omega^{-2k/v}  L^\text{even}_k(u,g) 
+
O\left( \omega^{-(2N+1)/v} \right)
\]
for $\omega>0$ and $v$ even, and
\[
\int_X u(t) \me^{-\omega g(t)} dt
=
\me^{-\omega g(t_0)} \frac{(|a| \omega)^{-1/v}}{v}
\sum_{k=0}^{N-1} 
\omega^{-k/v}  L^\text{odd}_k(u,g)
+
O\left( \omega^{-(N+1)/v} \right)
\]
for $\omega>0$ and $v$ odd.
Here $L^\text{even}_k$ and $L^\text{odd}_k$ are the functions defined in 
Theorem~\ref{asymptotics_degenerate} (but without the stipulation $t_0=0$). 
Moreover, the big-oh constants are bounded when the derivatives of $g$ 
up to order $(v+1)(N+1)+1$ all stay bounded in supremum norm over $X$.
\end{theorem}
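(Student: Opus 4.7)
The plan is to adapt the proof of Theorem~\ref{Hormander} to the case of a single stationary point $t_0$ of order $v\ge 2$ in one real dimension. The first step is a standard localization: choose $\kappa\in C_c^\infty(X)$ equal to $1$ on a small neighbourhood of $t_0$ with $\supp\kappa\subset\supp u$. Since $\Re g$ is continuous and strictly positive on the compact set $\supp u\setminus\{t_0\}$ while $\Re g(t_0)=0$, the contribution of $(1-\kappa)u$ to the integral decays faster than any power of $\omega^{-1}$, so it suffices to analyse $\int\kappa(t)u(t)\me^{-\omega g(t)}\,dt$ on an arbitrarily small interval around $t_0$.

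Next, following the decomposition used to define $L^{\text{even}}_k$ and $L^{\text{odd}}_k$ in Theorem~\ref{asymptotics_degenerate}, write
\[
g(t)-g(t_0)=a(t-t_0)^v+\gtildeunder(t),\qquad a=\frac{g^{(v)}(t_0)}{v!},
\]
where $\gtildeunder(t)=O\bigl((t-t_0)^{v+1}\bigr)$ near $t_0$. Factor $\me^{-\omega g(t)}=\me^{-\omega g(t_0)}\me^{-\omega a(t-t_0)^v}\me^{-\omega\gtildeunder(t)}$ and Taylor expand the last exponential to a length $L=L(N)$:
\[
\me^{-\omega\gtildeunder(t)}=\sum_{l=0}^{L-1}\frac{(-\omega\gtildeunder(t))^l}{l!}+R_L(t,\omega).
\]
Because $\gtildeunder$ vanishes to order at least $v+1$ at $t_0$ and $\kappa u$ is supported near $t_0$, each power $(\omega\gtildeunder(t))^l$ is a genuine asymptotic correction in the localized integral. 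Then Taylor expand $\kappa u\cdot\gtildeunder^l$ in powers of $t-t_0$ and integrate term by term.

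The core computation is the evaluation of the moment integrals
\[
I_m(\omega):=\int_{\reals}(t-t_0)^m\,\me^{-\omega a(t-t_0)^v}\,dt,\qquad m\in\nats,
\]
where the range has been extended from $\supp\kappa$ to all of $\reals$ at the cost of an exponentially small error. Changing variables (for real positive $a$) or rotating the contour in the complex $(t-t_0)$-plane onto a ray along which $\omega a(t-t_0)^v$ is real positive, using the branch of $z^{-1/v}$ fixed in Section~\ref{sec:notation}, reduces $I_m$ to a gamma function evaluation. For $v$ even the two halves of $\reals$ pair into a single factor $\Gamma((m+1)/v)$ (vanishing when $m$ is odd); for $v$ odd they rotate to distinct rays whose contributions combine into the prefactor $\zeta^{m+1}+(-1)^m\zeta^{-(m+1)}$ appearing in $L^{\text{odd}}_k$. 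Matching the exponents $m=2k+vl$ in the even case and $m=k+vl$ in the odd case identifies the coefficients of each $\omega^{-2k/v}$ (resp.\ $\omega^{-k/v}$) with the closed formulas for $L^{\text{even}}_k$ and $L^{\text{odd}}_k$.

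The main obstacle is the complex-analytic bookkeeping when $a\notin\preals$. The change of variable in $I_m$ is really a contour deformation, and one must check that this rotation can be performed without picking up residues and that the tails of the rotated contours decay; for $v$ odd the positive and negative halves of $\reals$ rotate to different rays, which is precisely the source of the $\sgn a$ and conjugate-pair structure in $L^{\text{odd}}_k$. The uniform bound on the big-oh constant then reduces to controlling the remainder $R_L$ together with the Taylor remainders of $\kappa u$ and of $\gtildeunder^l$, all in terms of supremum norms of derivatives of $g$; tracking the highest order of differentiation that actually appears yields the stated threshold $(v+1)(N+1)+1$.
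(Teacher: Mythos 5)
The paper gives no proof of Theorem~\ref{Elst}: it is cited from \cite[Theorem 1]{Elst2008}, a result ter Elst proved at the authors' request (see the acknowledgments), and the text says only that it ``is proved by adapting H\"{o}rmander's approach.'' Your plan --- localize, split off the leading monomial $a(t-t_0)^v$, Taylor-expand $\me^{-\omega\gtildeunder}$, and reduce to gamma-function moment integrals --- is exactly that approach, so the overall strategy is the right one.

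There is, however, a genuine gap in your localization step. You assert that $\Re g$ is strictly positive on ``the compact set $\supp u\setminus\{t_0\}$.'' That set is not compact (a point has been deleted from a compact set), and more importantly the hypotheses do not imply $\Re g>0$ away from $t_0$: $\Re g\ge 0$, $\Re g(t_0)=0$, and uniqueness of the \emph{stationary} point do not rule out other zeros of $\Re g$, since $\Re g(t_1)=0$ together with $\Re g\ge 0$ forces only $(\Re g)'(t_1)=0$, not $g'(t_1)=0$. (In the paper's application one does get $\Re\gtilde>0$ off the origin via Lemma~\ref{gtilde}, but the theorem is stated more generally.) The correct localization, as in H\"{o}rmander's proof of Theorem 7.7.5, is non-stationary phase: on $\supp\bigl((1-\kappa)u\bigr)$ one has $g'\neq 0$, and repeated integration by parts in $\int(1-\kappa)u\,\me^{-\omega g}\,dt$ gives decay faster than every power of $\omega^{-1}$. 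A second point that needs care is the ordering of ``extend to $\reals$'' and ``rotate the contour'' for the moment integrals. For $v$ odd the constraint $\Re g\ge 0$ near $t_0$ forces $a$ to be purely imaginary, so $\lvert\me^{-\omega a(t-t_0)^v}\rvert=1$ on $\reals$ and the extended integrals $\int_\reals(t-t_0)^m\me^{-\omega a(t-t_0)^v}\,dt$ are not absolutely convergent --- the ``exponentially small error'' from extension is not available in the form you state. One must rotate the finite arcs onto steepest-descent rays first and only then send the endpoints to infinity along those rays, where the integrand genuinely decays; you acknowledge the rotation but present the two steps in an order that does not quite work as written.
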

\section*{Acknowledgments}
We thank A. F. M. ter Elst for kindly proving Theorem~\ref{Elst} on request 
and proofreading this article.
We also thank the anonymous referee for providing helpful recommendations.
\bibliographystyle{amsalpha}
\bibliography{combinatorics}
\end{document}